\begin{document}
\theoremstyle{plain}
\newtheorem{thm}{Theorem}[section]
\newtheorem*{thm1}{Theorem 1}
\newtheorem*{thm2}{Theorem 2}
\newtheorem{lemma}[thm]{Lemma}
\newtheorem{lem}[thm]{Lemma}
\newtheorem{cor}[thm]{Corollary}
\newtheorem{prop}[thm]{Proposition}
\newtheorem{propose}[thm]{Proposition}
\newtheorem{variant}[thm]{Variant}
\theoremstyle{definition}
\newtheorem{notations}[thm]{Notations}
\newtheorem{rem}[thm]{Remark}
\newtheorem{rmk}[thm]{Remark}
\newtheorem{rmks}[thm]{Remarks}
\newtheorem{defn}[thm]{Definition}
\newtheorem{ex}[thm]{Example}
\newtheorem{claim}[thm]{Claim}
\newtheorem{ass}[thm]{Assumption}
\numberwithin{equation}{section}
\newcounter{elno}                
\def\points{\list
{\hss\llap{\upshape{(\roman{elno})}}}{\usecounter{elno}}} 
\let\endpoints=\endlist


\catcode`\@=11
%
%
\def\opn#1#2{\def#1{\mathop{\kern0pt\fam0#2}\nolimits}} 
\def\bold#1{{\bf #1}}%
\def\underrightarrow{\mathpalette\underrightarrow@}
\def\underrightarrow@#1#2{\vtop{\ialign{$##$\cr
 \hfil#1#2\hfil\cr\noalign{\nointerlineskip}%
 #1{-}\mkern-6mu\cleaders\hbox{$#1\mkern-2mu{-}\mkern-2mu$}\hfill
 \mkern-6mu{\to}\cr}}}
\let\underarrow\underrightarrow
\def\underleftarrow{\mathpalette\underleftarrow@}
\def\underleftarrow@#1#2{\vtop{\ialign{$##$\cr
 \hfil#1#2\hfil\cr\noalign{\nointerlineskip}#1{\leftarrow}\mkern-6mu
 \cleaders\hbox{$#1\mkern-2mu{-}\mkern-2mu$}\hfill
 \mkern-6mu{-}\cr}}}
%
%

%
\def\:{\colon}
\let\oldtilde=\tilde
\def\tilde#1{\mathchoice{\widetilde{#1}}{\widetilde{#1}}%
{\indextil{#1}}{\oldtilde{#1}}}
\def\indextil#1{\lower2pt\hbox{$\textstyle{\oldtilde{\raise2pt%
\hbox{$\scriptstyle{#1}$}}}$}}
\def\pnt{{\raise1.1pt\hbox{$\textstyle.$}}}
%

%
\let\amp@rs@nd@\relax
\newdimen\ex@\ex@.2326ex
\newdimen\bigaw@
\newdimen\minaw@
\minaw@16.08739\ex@
\newdimen\minCDaw@
\minCDaw@2.5pc
\newif\ifCD@
\def\minCDarrowwidth#1{\minCDaw@#1}
\newenvironment{CD}{\@CD}{\@endCD}
\def\@CD{\def\A##1A##2A{\llap{$\vcenter{\hbox
 {$\scriptstyle##1$}}$}\Big\uparrow\rlap{$\vcenter{\hbox{%
$\scriptstyle##2$}}$}&&}%
\def\V##1V##2V{\llap{$\vcenter{\hbox
 {$\scriptstyle##1$}}$}\Big\downarrow\rlap{$\vcenter{\hbox{%
$\scriptstyle##2$}}$}&&}%
\def\={&\hskip.5em\mathrel
 {\vbox{\hrule width\minCDaw@\vskip3\ex@\hrule width
 \minCDaw@}}\hskip.5em&}%
\def\verteq{\Big\Vert&&}%
\def\noarr{&&}%
\def\vspace##1{\noalign{\vskip##1\relax}}\relax\let\amp@rs@nd@&\iffalse}\fi
 \CD@true\vcenter\bgroup\relax\let\\=\cr\iffalse}\fi\tabskip\z@skip\baselineskip20\ex@
 \lineskip3\ex@\lineskiplimit3\ex@\halign\bgroup
 &\hfill$\m@th##$\hfill\cr}
\def\@endCD{\cr\egroup\egroup}
%
\def\>#1>#2>{\amp@rs@nd@\setbox\z@\hbox{$\scriptstyle
 \;{#1}\;\;$}\setbox\@ne\hbox{$\scriptstyle\;{#2}\;\;$}\setbox\tw@
 \hbox{$#2$}\ifCD@
 \global\bigaw@\minCDaw@\else\global\bigaw@\minaw@\fi
 \ifdim\wd\z@>\bigaw@\global\bigaw@\wd\z@\fi
 \ifdim\wd\@ne>\bigaw@\global\bigaw@\wd\@ne\fi
 \ifCD@\hskip.5em\fi
 \ifdim\wd\tw@>\z@
 \mathrel{\mathop{\hbox to\bigaw@{\rightarrowfill}}\limits^{#1}_{#2}}\else
 \mathrel{\mathop{\hbox to\bigaw@{\rightarrowfill}}\limits^{#1}}\fi
 \ifCD@\hskip.5em\fi\amp@rs@nd@}
\def\<#1<#2<{\amp@rs@nd@\setbox\z@\hbox{$\scriptstyle
 \;\;{#1}\;$}\setbox\@ne\hbox{$\scriptstyle\;\;{#2}\;$}\setbox\tw@
 \hbox{$#2$}\ifCD@
 \global\bigaw@\minCDaw@\else\global\bigaw@\minaw@\fi
 \ifdim\wd\z@>\bigaw@\global\bigaw@\wd\z@\fi
 \ifdim\wd\@ne>\bigaw@\global\bigaw@\wd\@ne\fi
 \ifCD@\hskip.5em\fi
 \ifdim\wd\tw@>\z@
 \mathrel{\mathop{\hbox to\bigaw@{\leftarrowfill}}\limits^{#1}_{#2}}\else
 \mathrel{\mathop{\hbox to\bigaw@{\leftarrowfill}}\limits^{#1}}\fi
 \ifCD@\hskip.5em\fi\amp@rs@nd@}
%
%
\newenvironment{CDS}{\@CDS}{\@endCDS}
\def\@CDS{\def\A##1A##2A{\llap{$\vcenter{\hbox
 {$\scriptstyle##1$}}$}\Big\uparrow\rlap{$\vcenter{\hbox{%
$\scriptstyle##2$}}$}&}%
\def\V##1V##2V{\llap{$\vcenter{\hbox
 {$\scriptstyle##1$}}$}\Big\downarrow\rlap{$\vcenter{\hbox{%
$\scriptstyle##2$}}$}&}%
\def\={&\hskip.5em\mathrel
 {\vbox{\hrule width\minCDaw@\vskip3\ex@\hrule width
 \minCDaw@}}\hskip.5em&}
\def\verteq{\Big\Vert&}
\def\novarr{&}
\def\noharr{&&}
\def\SE##1E##2E{\slantedarrow(0,18)(4,-3){##1}{##2}&}
\def\SW##1W##2W{\slantedarrow(24,18)(-4,-3){##1}{##2}&}
\def\NE##1E##2E{\slantedarrow(0,0)(4,3){##1}{##2}&}
\def\NW##1W##2W{\slantedarrow(24,0)(-4,3){##1}{##2}&}
\def\slantedarrow(##1)(##2)##3##4{%
\thinlines\unitlength1pt\lower 6.5pt\hbox{\begin{picture}(24,18)%
\put(##1){\vector(##2){24}}%
\put(0,8){$\scriptstyle##3$}%
\put(20,8){$\scriptstyle##4$}%
\end{picture}}}
\def\vspace##1{\noalign{\vskip##1\relax}}\relax\let\amp@rs@nd@&\iffalse}\fi
 \CD@true\vcenter\bgroup\relax\let\\=\cr\iffalse}\fi\tabskip\z@skip\baselineskip20\ex@
 \lineskip3\ex@\lineskiplimit3\ex@\halign\bgroup
 &\hfill$\m@th##$\hfill\cr}
\def\@endCDS{\cr\egroup\egroup}
%
\newdimen\TriCDarrw@
\newif\ifTriV@
\newenvironment{TriCDV}{\@TriCDV}{\@endTriCD}
\newenvironment{TriCDA}{\@TriCDA}{\@endTriCD}
\def\@TriCDV{\TriV@true\def\TriCDpos@{6}\@TriCD}
\def\@TriCDA{\TriV@false\def\TriCDpos@{10}\@TriCD}
\def\@TriCD#1#2#3#4#5#6{%
\setbox0\hbox{$\ifTriV@#6\else#1\fi$}
\TriCDarrw@=\wd0 \advance\TriCDarrw@ 24pt
\advance\TriCDarrw@ -1em
\def\SE##1E##2E{\slantedarrow(0,18)(2,-3){##1}{##2}&}
\def\SW##1W##2W{\slantedarrow(12,18)(-2,-3){##1}{##2}&}
\def\NE##1E##2E{\slantedarrow(0,0)(2,3){##1}{##2}&}
\def\NW##1W##2W{\slantedarrow(12,0)(-2,3){##1}{##2}&}

\def\slantedarrow(##1)(##2)##3##4{\thinlines\unitlength1pt
\lower 6.5pt\hbox{\begin{picture}(12,18)%
\put(##1){\vector(##2){12}}%
\put(-4,\TriCDpos@){$\scriptstyle##3$}%
\put(12,\TriCDpos@){$\scriptstyle##4$}%
\end{picture}}}
\def\={\mathrel {\vbox{\hrule
   width\TriCDarrw@\vskip3\ex@\hrule width
   \TriCDarrw@}}}
\def\>##1>>{\setbox\z@\hbox{$\scriptstyle
 \;{##1}\;\;$}\global\bigaw@\TriCDarrw@
 \ifdim\wd\z@>\bigaw@\global\bigaw@\wd\z@\fi
 \hskip.5em
 \mathrel{\mathop{\hbox to \TriCDarrw@
{\rightarrowfill}}\limits^{##1}}
 \hskip.5em}
\def\<##1<<{\setbox\z@\hbox{$\scriptstyle
 \;{##1}\;\;$}\global\bigaw@\TriCDarrw@
 \ifdim\wd\z@>\bigaw@\global\bigaw@\wd\z@\fi
 \mathrel{\mathop{\hbox to\bigaw@{\leftarrowfill}}\limits^{##1}}
 }
 \CD@true\vcenter\bgroup\relax\let\\=\cr\iffalse}\fi
 \tabskip\z@skip\baselineskip20\ex@
 \lineskip3\ex@\lineskiplimit3\ex@
 \ifTriV@
 \halign\bgroup
 &\hfill$\m@th##$\hfill\cr
#1&\multispan3\hfill$#2$\hfill&#3\\
&#4&#5\\
&&#6\cr\egroup%
\else
 \halign\bgroup
 &\hfill$\m@th##$\hfill\cr
&&#1\\%
&#2&#3\\
#4&\multispan3\hfill$#5$\hfill&#6\cr\egroup
\fi}
\def\@endTriCD{\egroup} 
\newcommand{\mc}{\mathcal} 
\newcommand{\mb}{\mathbb} 
\newcommand{\surj}{\twoheadrightarrow} 
\newcommand{\inj}{\hookrightarrow} \newcommand{\zar}{{\rm zar}} 
\newcommand{\an}{{\rm an}} \newcommand{\red}{{\rm red}} 
\newcommand{\Rank}{{\rm rk}} \newcommand{\codim}{{\rm codim}} 
\newcommand{\rank}{{\rm rank}} \newcommand{\Ker}{{\rm Ker \ }} 
\newcommand{\Pic}{{\rm Pic}} \newcommand{\Div}{{\rm Div}} 
\newcommand{\Hom}{{\rm Hom}} \newcommand{\im}{{\rm im}} 
\newcommand{\Spec}{{\rm Spec \,}} \newcommand{\Sing}{{\rm Sing}} 
\newcommand{\sing}{{\rm sing}} \newcommand{\reg}{{\rm reg}} 
\newcommand{\Char}{{\rm char}} \newcommand{\Tr}{{\rm Tr}} 
\newcommand{\Gal}{{\rm Gal}} \newcommand{\Min}{{\rm Min \ }} 
\newcommand{\Max}{{\rm Max \ }} \newcommand{\Alb}{{\rm Alb}\,} 
\newcommand{\GL}{{\rm GL}\,} 
\newcommand{\ie}{{\it i.e.\/},\ } \newcommand{\niso}{\not\cong} 
\newcommand{\nin}{\not\in} 
\newcommand{\soplus}[1]{\stackrel{#1}{\oplus}} 
\newcommand{\by}[1]{\stackrel{#1}{\rightarrow}} 
\newcommand{\longby}[1]{\stackrel{#1}{\longrightarrow}} 
\newcommand{\vlongby}[1]{\stackrel{#1}{\mbox{\large{$\longrightarrow$}}}} 
\newcommand{\ldownarrow}{\mbox{\Large{\Large{$\downarrow$}}}} 
\newcommand{\lsearrow}{\mbox{\Large{$\searrow$}}} 
\renewcommand{\d}{\stackrel{\mbox{\scriptsize{$\bullet$}}}{}} 
\newcommand{\dlog}{{\rm dlog}\,} 
\newcommand{\longto}{\longrightarrow} 
\newcommand{\vlongto}{\mbox{{\Large{$\longto$}}}} 
\newcommand{\limdir}[1]{{\displaystyle{\mathop{\rm lim}_{\buildrel\longrightarrow\over{#1}}}}\,} 
\newcommand{\liminv}[1]{{\displaystyle{\mathop{\rm lim}_{\buildrel\longleftarrow\over{#1}}}}\,} 
\newcommand{\norm}[1]{\mbox{$\parallel{#1}\parallel$}} 
\newcommand{\boxtensor}{{\Box\kern-9.03pt\raise1.42pt\hbox{$\times$}}} 
\newcommand{\into}{\hookrightarrow} \newcommand{\image}{{\rm image}\,} 
\newcommand{\Lie}{{\rm Lie}\,} 
\newcommand{\CM}{\rm CM}
\newcommand{\sext}{\mbox{${\mathcal E}xt\,$}} 
\newcommand{\shom}{\mbox{${\mathcal H}om\,$}} 
\newcommand{\coker}{{\rm coker}\,} 
\newcommand{\sm}{{\rm sm}} 
\newcommand{\tensor}{\otimes} 
\renewcommand{\iff}{\mbox{ $\Longleftrightarrow$ }} 
\newcommand{\supp}{{\rm supp}\,} 
\newcommand{\ext}[1]{\stackrel{#1}{\wedge}} 
\newcommand{\onto}{\mbox{$\,\>>>\hspace{-.5cm}\to\hspace{.15cm}$}} 
\newcommand{\propsubset} {\mbox{$\textstyle{ 
\subseteq_{\kern-5pt\raise-1pt\hbox{\mbox{\tiny{$/$}}}}}$}} 
\newcommand{\sA}{{\mathcal A}} 
\newcommand{\sB}{{\mathcal B}} \newcommand{\sC}{{\mathcal C}} 
\newcommand{\sD}{{\mathcal D}} \newcommand{\sE}{{\mathcal E}} 
\newcommand{\sF}{{\mathcal F}} \newcommand{\sG}{{\mathcal G}} 
\newcommand{\sH}{{\mathcal H}} \newcommand{\sI}{{\mathcal I}} 
\newcommand{\sJ}{{\mathcal J}} \newcommand{\sK}{{\mathcal K}} 
\newcommand{\sL}{{\mathcal L}} \newcommand{\sM}{{\mathcal M}} 
\newcommand{\sN}{{\mathcal N}} \newcommand{\sO}{{\mathcal O}} 
\newcommand{\sP}{{\mathcal P}} \newcommand{\sQ}{{\mathcal Q}} 
\newcommand{\sR}{{\mathcal R}} \newcommand{\sS}{{\mathcal S}} 
\newcommand{\sT}{{\mathcal T}} \newcommand{\sU}{{\mathcal U}} 
\newcommand{\sV}{{\mathcal V}} \newcommand{\sW}{{\mathcal W}} 
\newcommand{\sX}{{\mathcal X}} \newcommand{\sY}{{\mathcal Y}} 
\newcommand{\sZ}{{\mathcal Z}} \newcommand{\ccL}{\sL} 
 \newcommand{\A}{{\mathbb A}} \newcommand{\B}{{\mathbb 
B}} \newcommand{\C}{{\mathbb C}} \newcommand{\D}{{\mathbb D}} 
\newcommand{\E}{{\mathbb E}} \newcommand{\F}{{\mathbb F}} 
\newcommand{\G}{{\mathbb G}} \newcommand{\HH}{{\mathbb H}} 
\newcommand{\I}{{\mathbb I}} \newcommand{\J}{{\mathbb J}} 
\newcommand{\M}{{\mathbb M}} \newcommand{\N}{{\mathbb N}} 
\renewcommand{\P}{{\mathbb P}} \newcommand{\Q}{{\mathbb Q}} 

\newcommand{\R}{{\mathbb R}} \newcommand{\T}{{\mathbb T}} 
\newcommand{\U}{{\mathbb U}} \newcommand{\V}{{\mathbb V}} 
\newcommand{\W}{{\mathbb W}} \newcommand{\X}{{\mathbb X}} 
\newcommand{\Y}{{\mathbb Y}} \newcommand{\Z}{{\mathbb Z}} 

\title{The lower bound on the HK multiplicities of quadric hypersurfaces}

\author{Vijaylaxmi Trivedi}
\date{}
\address{School of Mathematics, Tata Institute of Fundamental Research, 
Homi Bhabha Road, Mumbai-40005, India }
\email{vija@math.tifr.res.in}

\thanks{}
\begin{abstract}Here we prove that the Hilbert-Kunz mulitiplicity of a quadric hypersurface
of dimension $d$ and odd characteristic $p\geq 2d-4$ is
bounded below by $1+m_d$, where $m_d$ is the $d^{th}$ coefficient in the expansion of 
$\mbox{sec}+\mbox{tan}$. This proves a part of the long standing conjecture 
of Watanabe-Yoshida. We also give an upper bound on the HK multiplicity of 
such a hypersurface.

We approach the question using the HK density function and 
the classification of 
ACM bundles on the smooth quadrics via matrix factorizations.
\end{abstract}
\maketitle
\section{Introduction}

Let $R$ be  a Noetherian ring containing a field of characteristic $p>0$ and
 $I$ be an ideal of finite colength in $R$. For such a pair Monsky ([M]) had introduced a 
characteristic $p$ invariant known as the
Hilbert-Kunz (HK) multiplicity $e_{HK}(R, I)$. 
This is   
a positive real number ($\geq 1$) given by
$$e_{HK}(R, I) = \lim_{n\to \infty}\frac{\ell(R/I^{[q]})}{q^d}.$$

If $(R, {\bf m} , k)$ is a formally unmixed Noetherian local ring  
then it was proved by Watanabe-Yoshida (Theorem~1.5 in [WY1]) that 
$e_{HK}(R,{\bf m}) = 1$ if and only if $R$ is regular.  For the next best class of rings, 
namely  quadric hypersurfaces they made the following  (Conjecture~4.2 in [WY2])

\vspace{5pt}

\noindent{\bf Conjecture}~[WY]\quad{\it Let $p>2$ be  prime and $K = {\bf {\bar {F}_p}}$ and let 
$(R, {\bf m}_R, K)$ be  a formally unmixed nonregular local ring of dimension $n+1$. Then 
$$e_{HK}(R, {\bf m}_R) \geq e_{HK}(R_{p,n+1}, {\bf m}) \geq 1+m_{n+1}.$$}

Here 
$R_{p, n+1} = K[x_0, \ldots, x_{n+1}]/ (x_0^2+\cdots + x_{n+1}^2)$
and  $m_{n+1}$ are the constants occuring as the coefficients of the following expression
$$\mbox{sec}(x)+\mbox{tan}(x) = 1+\sum_{n=0}^{\infty}m_{n+1}x^{n+1},
\quad\mbox{where}\quad |x|<\pi/2.$$

In the same paper ([WY2]) they showed that  the  conjecture holds for $n\leq 3$. 
The second inequality  
of the conjecture for $n\leq 5$ was proved by  Yoshida in [Y]. 
Later the conjecture upto $n\leq 5$ was proved by Aberbach-Enescu in [AE2].

In the context of this
 conjecture, we recall the following result 
(around 2010) of  Gessel-Monsky:
$$\lim_{p\to \infty}e_{HK}(R_{p, n+1}, {\bf m}) = 1+m_{n+1}.$$

\vspace{5pt}

In higher dimensional cases for the class of local formally unmixed nonregular rings of 
fixed dimension $d$, various people ([AE1], [AE2], 
Celikbas-Dao-Huneke-Zhang in  [CDHZ])  have given a 
lower bound $C(d)$ ($> 1$) on the HK multiplicity $e_{HK}(R, {\bf m})$.
However such  lower bounds  $C(d)$ are weaker than the bound given in the above conjecture
as implied by the above result of Gessel-Monsky.

Enescu and Shimomoto in [ES] have  proved the first inequality 
$e_{HK}(R) \geq e_{HK}(R_{p, n+1})$, where $R$ belongs to the class of
 complete intersection local rings.

The conjecture [WY] and related problems have been revisited in the recent paper [JNSWY].

Here  we focus on the second inequality of the above mentioned conjecture 
and prove the following in Section~4.

\begin{thm}\label{t1}Let $p\neq 2$ and  let $p\geq n-2$ for $n$ even and 
let $p\geq 2n-4$ for $n$ odd. Then 
$$1+m_{n+1} + \left(\frac{2n-4}{p}\right) 
\geq e_{HK}(R_{p, n+1}, {\bf m})\geq   1+m_{n+1}.$$
\end{thm}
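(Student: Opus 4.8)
The plan is to compute $e_{HK}(R_{p,n+1},\mathbf m)$ through the Hilbert--Kunz density function, using the identity $e_{HK}(R,\mathbf m)=\int_0^{n+2}f_R(x)\,dx$ with $f_R(x)=\lim_{q\to\infty}q^{-n}\dim_K\bigl(R/\mathbf m^{[q]}\bigr)_{\lfloor xq\rfloor}$, where $R:=R_{p,n+1}$ and $q=p^e$. First I would move to the smooth $n$-dimensional quadric $X=\mathrm{Proj}(R)\subset\mathbb P^{n+1}$ and reinterpret the graded pieces sheaf-theoretically. Writing $\mathcal K_q$ for the rank-$(n+1)$ kernel bundle of the surjection $\mathcal O_X^{\,n+2}\to\mathcal O_X(q)$ given by $(x_0^q,\dots,x_{n+1}^q)$, and using that $H^i(X,\mathcal O_X(j))=0$ for $0<i<n$ (which holds since $n\geq 2$), one obtains $\bigl(R/\mathbf m^{[q]}\bigr)_m\cong H^1\bigl(X,\mathcal K_q(m-q)\bigr)$, hence $f_R(x)=\lim_q q^{-n}\,h^1\bigl(X,\mathcal K_q(\lfloor xq\rfloor-q)\bigr)$. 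On $\mathbb P^{n+1}$ the bundle $\mathcal K_q$ is the Frobenius pullback $(F^{e*}\Omega_{\mathbb P^{n+1}}(1))|_X$, but cotangent-type bundles are not ACM, so this presentation is not yet workable.

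The core of the argument is to rewrite $\mathcal K_q$ in ACM terms via matrix factorizations. Since $R$ is a hypersurface, the minimal $R$-free resolution of $R/\mathbf m^{[q]}$ is eventually $2$-periodic (Eisenbud), and for the quadratic form $Q=x_0^2+\cdots+x_{n+1}^2$ every matrix factorization is, up to trivial summands, the Clifford one, $\varphi\psi=\psi\varphi=Q\cdot I_N$ with $N=2^{\lfloor(n+1)/2\rfloor}$; after applying Frobenius its entries become linear forms in the $x_i^q$, and the associated bundles on $X$ are the Frobenius pullbacks $F^{e*}\mathcal S$ (resp.\ $F^{e*}\mathcal S^{\pm}$) of the spinor bundle(s). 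Concretely, combining the Koszul relations among the $x_i^q$ with the extra relation $\sum_i x_i^q\!\cdot x_i^q=Q^q=0$ on $X$ --- which yields the distinguished section $(x_0^q,\dots,x_{n+1}^q)$ of $\mathcal K_q(q)$ --- one identifies, after finitely many syzygy steps, the iterated syzygy sheaves of $\mathbf m^{[q]}$ with direct sums of line bundles $\mathcal O_X(a)$ and Frobenius twists of the spinor bundle. Pushing this through the cohomology long exact sequences reduces $f_R(x)$ to a combination of Euler characteristics of these summands on $X$ --- possible because, being ACM, they have vanishing intermediate cohomology, so their $h^0$ and $h^n$ are read off from Hilbert polynomials --- together with a bounded number of ``carry'' corrections that come from rewriting the $x_i^q$ in the hyperbolic coordinates $Q=u_1v_1+\cdots$ (in which $\mathbf m^{[q]}=(u_i^q,v_i^q)$) and expanding $q$ in base $p$. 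The result is an explicit piecewise-polynomial formula for $f_R$ in which $p$ enters only through $N$ and through these finitely many carries.

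Integrating the formula gives $e_{HK}(R_{p,n+1})$ in closed form. To recognise the main term as $1+m_{n+1}$ I would match the resulting alternating binomial sums with the Taylor coefficients of $\sec x+\tan x$ (the tangent--secant/André numbers), which is the combinatorial identity underlying the Gessel--Monsky limit $\lim_{p\to\infty}e_{HK}(R_{p,n+1})=1+m_{n+1}$; passing to $p\to\infty$ is exactly what erases the carry corrections. The two inequalities of the theorem then become statements about the finite-$p$ correction $\delta_p:=e_{HK}(R_{p,n+1})-(1+m_{n+1})$. That $\delta_p\geq 0$ (the lower-bound inequality) holds because $\delta_p$ is the integral of a manifestly nonnegative density: it records precisely the extra positive-dimensional cohomology of the $\mathcal K_q(j)$ forced by $R$ being non-regular, i.e.\ by the spinor bundle failing to split into line bundles. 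For the upper bound $\delta_p\leq\frac{2n-4}{p}$ one estimates the length of the $x$-interval supporting this extra density together with its height via the spinor Hilbert polynomial; this is where the hypotheses $p\geq 2n-4$ ($n$ odd) and $p\geq n-2$ ($n$ even) enter, guaranteeing that the Frobenius pullback of the spinor bundle stays ACM --- equivalently $H^i(X,F^{e*}\mathcal S(j))=0$ for $0<i<n$ --- for every twist $j$ in the window that influences the integral, so that no further $1/p$ terms appear.

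The step I expect to be the main obstacle is the middle one: producing a presentation of $\mathcal K_q$ --- equivalently, a sufficiently explicit description of the Frobenius pullback of the spinor bundle on $X$ --- that is both integrable and uniform in $q$. For large $p$ this pullback behaves as the naive ``generic'' prediction says, but in small characteristic extra indecomposable summands and nonzero intermediate cohomology intervene; locating the exact characteristic threshold at which this happens, and showing it is controlled by $2n-4$, is the delicate point that makes the error term sharp.
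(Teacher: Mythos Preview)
Your broad architecture matches the paper's --- compute $e_{HK}$ via the density function, interpret graded pieces cohomologically on the quadric, and exploit the BEH classification of ACM bundles --- but you have the Frobenius going the wrong way, and this is why the middle of your sketch stays vague. The paper does not try to decompose $\mathcal K_q = (F^{e*}\Omega_{\mathbb P^{n+1}}(1))|_{Q_n}$ or $F^{e*}\mathcal S$. Instead it applies the projection formula once more to write $h^1(Q_n,\mathcal K_q(a)) = h^1\bigl(Q_n,\Omega_{\mathbb P^{n+1}}(1)|_{Q_n} \otimes F^s_*\mathcal O(a+q)\bigr)$ and decomposes the \emph{pushforward} $F^s_*\mathcal O(a) = \bigoplus_t \mathcal O(t)^{\nu^s_t(a)} \oplus \bigoplus_t \mathcal S(t)^{\mu^s_t(a)}$ into ACM pieces. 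This decomposition is automatic (pushforward of ACM is ACM), and Achinger's Theorem~2 gives explicit linear inequalities in $a,t,q$ for which $t$'s occur; the known values of $h^1(Q_n,\Omega(t)|_{Q_n})$ and $h^1(Q_n,\mathcal S\otimes\Omega(t)|_{Q_n})$ then yield $\ell(R/\mathbf m^{[q]})_a = \nu^s_0(a) + 2\lambda_0\mu^s_1(a)$. The entire computation reduces to these integer multiplicities; no hyperbolic-coordinate rewrite or $p$-adic carry analysis is used for Theorem~\ref{t1} (the $p$-adic digits appear only in Section~5, for the finer $n=3$ formula).

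Consequently your diagnosis of the characteristic hypothesis is off: it has nothing to do with $F^{e*}\mathcal S$ being ACM. What $p\geq n-2$ ($n$ even) or $p\geq 2n-4$ ($n$ odd) buys, via Achinger's inequalities, is that for each $0\leq a<q$ at most two spinor types $\mathcal S(t)$ occur in $F^s_*\mathcal O(a)$, and only one type occurs once $a/q$ lies outside a subinterval of length $(n-2)/p$. When a single type is present, the pairs $\nu^s_{-i}(a)+2\lambda_0\mu^s_{-i+1}(a)$ are solved recursively from $h^0$ and $h^n$ of line-bundle twists (polynomials in $a,q$), producing a characteristic-free piecewise polynomial $f_{R^\infty_{n+1}}$ with $\int f_{R^\infty_{n+1}} = 1+m_{n+1}$ by Gessel--Monsky. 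On the remaining interval $\bigl[\tfrac{n+2}{2}-\tfrac{n-2}{2p},\ \tfrac{n+2}{2}+\tfrac{n-2}{2p}\bigr]$ one shows $f_{R_{p,n+1}}(x)-f_{R^\infty_{n+1}}(x) = 2\lambda_0\lim_q \mu^s_{\bullet}(a_q)/q^n$ for a single surviving $\mu$: this is $\geq 0$ because $\mu^s$ is a multiplicity (lower bound), and $\leq 2$ by the rank identity $q^n=\sum_i\nu^s_{-i}(a)+\lambda_0\sum_i\mu^s_{-i}(a)$, so integrating over an interval of length $(n-2)/p$ gives $\delta_p\leq (2n-4)/p$ (upper bound).
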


We approach the invariant by considering 
  the Hilbert-Kunz (HK) density functions for  the pair $(R_{p, n+1}, {\bf m})$.
 where $k$ is a perfect field of 
characteristic $p>0$. The notion 
of HK density function for $(R, I)$, where 
$R$ is a $\N$-graded ring  and $I$ is a homogeneous ideal in $R$ 
of finite colength,  was introduced  by the author ([T]) for standard graded rings and 
later generalized by the author and Watanabe ([TW2]) for 
$\N$-graded rings.  We  recall that the HK density function is a 
compactly supported continuous function $f_{R, I}:[0, \infty]\longto [0, \infty)$ 
defined as
$$f_{R,I}(x) = \lim_{s\to \infty}\ell(R/I^{[q]})_{\lfloor xq \rfloor}, 
\quad\mbox{where}\quad q=p^s$$
and 
$$e_{HK}(R,I) = \int_0^{\infty} f_{R,I}(x)dx.$$

To prove Theorem~\ref{t1} we  prove a stronger result about $\Char~p$ vis-a-vis $\Char~0$ 
(in Section~4):

\begin{thm}\label{t2}The function $f_{R^{\infty}_{n+1}}:[0, \infty)
\longto [0, \infty)$ given by 
$$f_{R^{\infty}_{n+1}}(x) :=\lim_{p\to \infty}f_{R_{p, n+1}, {\bf m}}(x)$$ is a 
well defined continuous function such that $\int_0^\infty f_{R^{\infty}_{n+1}} = 1+m_{n+1}$.

Moreover, if $p\geq 2n-4$ and $n$ is odd, or $p\geq n-2$ and $n$ even then 
$$\begin{array}{lcll}
 f_{R_{p, n+1}, {\bf m}}(x) & = &  f_{R^{\infty}{n+1}}(x) & 
x\in [0,\quad \frac{n+2}{2}-\frac{n-2}{2p}]\\\\
& \geq  &  f_{R^{\infty}_{n+1}}(x) & 
x\in [\frac{n+2}{2}-\frac{n-2}{2p},\quad \frac{n+2}{2}+\frac{n-2}{2p}]\\\\
& = &  f_{R^{\infty}_{n+1}}(x) & 
x\in [\frac{n+2}{2}+\frac{n-2}{2p},\quad \infty).
\end{array}$$
\end{thm}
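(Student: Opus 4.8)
The plan is to compute $f_{R_{p,n+1},\mathbf{m}}$ explicitly using the HK density function machinery together with the matrix-factorization classification of ACM bundles on the smooth quadric $Q = Q_{n} \subset \P^{n+1}$, and then to track precisely how the answer depends on $p$. First I would recall that for a hypersurface ring $R = k[x_0,\dots,x_{n+1}]/(f)$ with $f$ the quadratic form, the graded pieces $\ell(R/\mathbf{m}^{[q]})_{\lfloor xq\rfloor}$ are governed, in the limit $q\to\infty$, by the Frobenius pushforwards $F^{s}_{*}\sO_{Q}$ of the structure sheaf of the associated quadric, decomposed into indecomposable ACM bundles (equivalently, the matrix factorizations of $f$). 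On a smooth quadric these indecomposables are known: they are the twists of the spinor bundle(s) $S$ (two of them $S^{+},S^{-}$ when $n$ is even, one when $n$ is odd) together with line bundles $\sO(j)$. So the first real step is to write $F^{s}_{*}\sO_{Q} \cong \bigoplus \sO(a_i) \oplus \bigoplus S(b_j)$ (with the appropriate $S^{\pm}$ in the even case), with multiplicities that are polynomial/periodic in $q=p^{s}$, and to feed the Hilbert functions of $\sO(a)$ and $S(b)$ into the integral formula $f_{R,\mathbf{m}}(x) = \int f \,dx$ defining the density function.

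Next I would carry out the Char $0$ computation: taking $p\to\infty$, the discrete decomposition data converges (the multiplicities, suitably scaled by powers of $q$, have limits), and one obtains a closed-form continuous function $f_{R^{\infty}_{n+1}}(x)$ supported on $[0,\infty)$. Its integral should be computable in closed form and matched against the generating function $\sec x + \tan x = 1 + \sum m_{n+1}x^{n+1}$; this is exactly where the Gessel–Monsky limit $\lim_{p\to\infty} e_{HK}(R_{p,n+1},\mathbf{m}) = 1 + m_{n+1}$ re-enters, so I would expect $\int_{0}^{\infty} f_{R^{\infty}_{n+1}} = 1+m_{n+1}$ to follow either by a direct evaluation of the integral of the explicit $f_{R^{\infty}_{n+1}}$ or by invoking (a refinement of) that known limit identity once the pointwise convergence $f_{R_{p,n+1},\mathbf{m}}\to f_{R^{\infty}_{n+1}}$ is in hand. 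Continuity of $f_{R^{\infty}_{n+1}}$ is inherited from continuity of each $f_{R_{p,n+1},\mathbf{m}}$ plus local uniformity of the convergence.

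The heart of the theorem is the three-interval comparison, and this is where the characteristic-$p$ bundle theory does the work. The key point is that $f_{R_{p,n+1},\mathbf{m}}$ and $f_{R^{\infty}_{n+1}}$ differ only through the \emph{spinor} contributions: the line-bundle part of $F^{s}_{*}\sO_{Q}$ contributes the same "generic" shape in every characteristic, while the spinor bundle $S$ has rank $2^{\lfloor n/2\rfloor}$ and its presence in the Frobenius pushforward is controlled by how $S$ behaves under Frobenius, which is where a hypothesis like $p \geq 2n-4$ (resp. $p\geq n-2$) becomes necessary — this is the bound guaranteeing that $F^{s}_{*}S$ (or the semistability / splitting behaviour of the relevant bundles) is "as in characteristic zero" away from the middle. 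Concretely I would show: (i) on the left interval $[0,\ \tfrac{n+2}{2} - \tfrac{n-2}{2p}]$ the relevant Frobenius pushforwards split/decompose exactly as their characteristic-zero analogues predict, forcing equality $f_{R_{p,n+1},\mathbf{m}}(x) = f_{R^{\infty}_{n+1}}(x)$; (ii) symmetrically on the right interval $[\tfrac{n+2}{2} + \tfrac{n-2}{2p},\ \infty)$, using that the density function is eventually determined by the "top" piece and the support is as expected; and (iii) on the short middle interval of width $\tfrac{n-2}{p}$ around $x = \tfrac{n+2}{2}$, the characteristic-$p$ spinor contribution can only \emph{add} nonnegative mass relative to the $p\to\infty$ limit — because the discrepancy is itself a Hilbert function of an actual (ACM) bundle and hence nonnegative — giving the inequality $f_{R_{p,n+1},\mathbf{m}}(x) \geq f_{R^{\infty}_{n+1}}(x)$ there. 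The bound $\bigl(\tfrac{2n-4}{p}\bigr)$ in Theorem~\ref{t1} then falls out by integrating this middle-interval discrepancy: its width is $\tfrac{n-2}{p}$ and its height is bounded by a constant of order $2$, so the excess in $e_{HK}$ is at most $\tfrac{2n-4}{p}$.

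The main obstacle I anticipate is step (iii) together with the precise numerology: one must identify \emph{exactly} which indecomposable summands of $F^{s}_{*}\sO_{Q}$ are "on the boundary" (i.e. whose normalized degree converges to $\tfrac{n+2}{2}$), show that in characteristic $p$ these are accounted for by spinor bundles whose Frobenius behaviour is controlled by the standing hypothesis, and verify that the resulting correction to the density function is supported precisely on the stated interval of width $\tfrac{n-2}{p}$ with a uniformly bounded height — all of this requires pinning down the cohomology/Hilbert polynomials of the twisted spinor bundles $S(b)$ and their behaviour under $F^{s}_{*}$ with enough precision to locate the breakpoints at $\tfrac{n+2}{2} \pm \tfrac{n-2}{2p}$ rather than merely up to $O(1/p)$. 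The even-versus-odd dichotomy (one spinor bundle versus two, and the different hypotheses $p\geq n-2$ vs. $p\geq 2n-4$) will have to be handled as two parallel cases throughout.
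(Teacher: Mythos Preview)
Your outline is essentially the paper's approach: decompose $F^{s}_{*}\sO_{Q_n}(a)$ into line bundles and spinor bundles via the ACM classification, show that only the spinor contributions carry the $p$-dependence, and locate that dependence in a shrinking interval around $x=\tfrac{n+2}{2}$. Two points deserve sharpening, however.

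First, the bridge from the decomposition to the density function is not ``feeding in Hilbert functions'' in a generic way: the paper uses Achinger's identity
\[
\ell\bigl(R_{p,n+1}/\mathbf m^{[q]}\bigr)_a \;=\; \nu^{s}(0,a) + 2\lambda_0\,\mu^{s}(1,a),
\]
so the relevant quantity is exactly the \emph{pair} $\nu^{s}_{-i}(a)+2\lambda_0\mu^{s}_{-i+1}(a)$ for $0\le a<q$ and each $i$. These pairs are computed by applying $H^{0}$ and $H^{n}$ to the decomposition (giving two recursive families $Z^{s}_{-i}(a)$ and $L^{s}_{-i}(a)$ of polynomials in $q,a$); the computation is explicit \emph{except} at the two indices $i=n_0+1,\,n_0+2$, where an uncontrolled spinor multiplicity survives. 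That is what singles out the middle interval.

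Second, the role of the hypothesis $p\ge n-2$ (even $n$) or $p\ge 2n-4$ (odd $n$) is not semistability of $F^{s}_{*}S$. It is purely combinatorial: Achinger's theorem gives the exact range of $t$ for which $\sS(t)$ occurs in $F^{s}_{*}\sO(a)$, and the bound on $p$ forces at most \emph{two} adjacent spinor types to appear for any fixed $a$ (Lemma~\ref{l1}). With three or more types the recursion for the pairs does not close up. Likewise, the inequality on the middle interval is not because the discrepancy is a Hilbert function of a bundle: it is because the discrepancy is literally $2\lambda_0\lim_{q\to\infty}\mu^{s}_{j}(a_q)/q^n$ for an appropriate $j$, and $\mu^{s}_{j}$ is a multiplicity, hence $\ge 0$. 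The height bound for the integral in Theorem~\ref{t1} then comes from the trivial rank inequality $\lambda_0\mu^{s}_{j}(a)\le q^n$, not from a cohomological estimate.
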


Note that for $n=1$ and $n=2$ the ring $R_{p, n+1}$ is the homogeneous coordinate ring
of $\P^1_k$ and $\P_k^1\times \P_k^1$ respectively. In both the cases the invariants
  $e_{HK}(R_{p, n+1})$ and 
$f_{R_{p, n+1}, {\bf m}}$ are independent of the characteristic (see 
Eto-Yoshida [EY] and [T]).  Hence we can assume  $n\geq 3$.

Here given $n$ we explicitly write the function $f_{R^{\infty}_{n+1}}$ in Theorem~\ref{f0},
by first writing 
 the function $f_{R_{p, n+1}, {\bf m}}(x)$ for $x\in [0, \infty)\setminus 
[\frac{n+2}{2}-\frac{n-2}{2p},\quad \frac{n+2}{2}+\frac{n-2}{2p}]$.
Hence  we have a computation of $F$-thresholds as a (see Corollary~\ref{c1})

\vspace{5pt} 

\noindent{\bf Corollary}\quad{\it The $F$-thresholds $c^{\bf m}({\bf m}) = n$ 
for $R_{p, n+1}$ defined 
over a perfect field of   characteristic  $p\neq 2$, where 
$p\geq 2n-4$ and $n$ is odd, or $p\geq n-2$ and $n$ even.}

Theorem~\ref{t1} and the result of [ES] prove
the Conjecture~[WY] for the class of complete local rings (for large $p$):

{\it Let $p\neq 2$ and  let $p\geq n-2$ for $n$ even and 
let $p\geq 2n-4$ for $n$ odd.
Let $(R, {\bf m}_R, K)$ be  a formally unmixed nonregular local ring of dimension $n+1$. 
Then  $R$ is a complete intersection ring implies
$$e_{HK}(R, {\bf m}_R) \geq e_{HK}(R_{p,n+1}, {\bf m}) \geq 1+m_{n+1}.$$}

We go about computing the HK density function as follows.
Recall that there exists the complete classification of indecomposable Arithmetically
Cohen-Macaulay (ACM)  bundles (due to 
Buchweitz-Eisenbud-Herzog [BEH]) on smooth quadrics $Q_n = 
\mbox{Proj}~R_{p, n+1}$ in terms of line bundles 
$\sO(t) = \sO_{Q_n}(t)$
and twisted spinor bundles $\sS(t)$ (see Section~2).
Since $F^{s}_*(\sO(a))$ and $F^{s}_*(\sS(a))$ are ACM bundles on $Q_n$, for every
 $s^{th}$ iterated Frobenius map $F^s:Q_n\longto Q_n$ we have
$$F_*^s(\sO(a)) = \oplus _{t\in \Z}\sO(t)^{\nu^s(t, a)} \oplus \oplus_{t\in \Z}
\sS(t)^{\mu^s(t, a)}$$
and 
$$F_*^s(\sS(a)) = \oplus _{t\in \Z}\sO(t)^{{\tilde \nu}^s(t, a)} \oplus \oplus_{t\in \Z}
\sS(t)^{{\tilde \mu}^s(t, a)},$$

Achinger in [A]  showed that   
 the ranks of the bundles $\sO(t)$ and $\sS(t)$ are related 
to the graded components of the ring $R_{p, n+1}/{\bf m}^{[q]}$ by the formula
\begin{equation}\label{*}\ell (R_{p, n+1}/{\bf m}^{[q]})_a = 
\nu^{s}(0, a) + 2\lambda_0 \mu^s(1, a),\end{equation}
where 
${\bf m} = (x_0, \ldots, x_{n+1})$.
This at once  implies that to compute $f_{R_{p, n+1}}$ it is enough to compute all 
{\it the pairs} 
$$\nu^{s}(t, a) + 2\lambda_0 \mu^s(t+1, a),\quad\mbox{for}\quad 
q = p^s>> 0,\quad\mbox{where}~~ t\in \Z,~~~\mbox{and}\quad 0\leq a<q.$$

Now we 
 use another result (Theorem~2 in~[A]) which
 determines, in terms of $q=p^s$, $a$ and $n$,  the occurence of the 
bundle $\sO(t)$ or $\sS(t)$  in  the decomposition 
of $F^s_*(\sO(a))$ and $F^s_*(\sS(a))$.

The layout of the paper is as follows: 

In Section~2 we recall the known results.

In Section~3 we prove that the pairs  
are computable if the decomposition of 
 $F^s_*(\sO(a))$ has only one {\it type} of spinor bundles. However this is not 
always 
the case, 
as the existence of only one type of spinor bundle
would imply that the HK density function
$f_{R_{p, n+1}}$ and thereofore the HK multiplicity $e_{HK}(R_{p, n+1})$ are independent of 
the characteristic $p$. 
However, for large enough $p$ 
one can ensure that there are at the most two types of spinor bundles, as observed in 
 Lemma~\ref{l1}.

We analyse the {\it difficult range} in the interval $[0, 1)$,
with the property that if $a/q$ is outside this range, then the bundle 
$F^s(\sO(a))$ has atmost one type of spinor bundle. In particular 
 every pair $\{\nu^s(t, a)+2\lambda_0\mu^s(t+1, a)\}_t$ is
 computable provided $a/q$ avoids  this range.

Notably   this range keeps shrinking as $p\to \infty$.
We use this observation in Section~4 to
 explicitly write down the  HK density function  everywhere except on the range 
(as in Theorem~\ref{t2}) and also get a 
closed formula for the function $f_{R^\infty_{n+1}}$.

On this range too the HK density function $R_{p, n+1}$ 
can be computed as suggested by the Lemma~\ref{F1} and the computation done  in Section~5
for $n=3$ case.
However the expression will get  more complicated as the case  $n=3$ shows; 
here the function $f_{R_{p, 4}}$ is a
piecewise polynomial and,
on the range $[2+(p-1)/2p, 2+(p+1)/p)$, it is given by  
infinitely many polynomial functions, defined using a nested sequence of intervals.
Looking further, this suggests possible computations for the HK density and related 
invariants in other situations, where we have information on ACM bundles 
using matrix factorizations.

\section{preliminaries}

In this section  we recall the relevant results which are known in the literature.

\begin{defn} A vector bundle $E$ on a smooth $n$-dimensional hypersurface 
$X = \mbox{Proj}~S/(f)$, where $S= k[x_0, \ldots, x_{n+1}]$ 
is called arithmetically Cohen-Macaulay (ACM) if 
$H^i(X, E(m))= 0$, for $0 < i < n$ and for all $m$. 

It is easy to check that 
a vector bundle $E$ on $X$  is ACM if and only if the corresponding graded 
$S/(f)$ module is maximal Cohen-Macaulay (MCM).

\end{defn}

Let $Q_n = \mbox{Proj}~S/(f)$ be the quadric given by the hypersurface 
 $x_0^2+\cdots +x_{n+1}^2 = 0$ in $\P^{n+1}_k = \mbox{Proj}~S$, where $n\geq 3$. 
Let $k$ be an 
algebraically closed field. Henceforth we assume $n>2$.

By B-E-H classification ([BEH]) of indecomposable graded 
MCM modules over
quadrics we have: Other than free modules on $S/(f)$, there is (upto shift) only 
one  indecomposable module $M$ (which is the single spinor bundle $\Sigma$ on $Q_n$)
 if $n$ is odd and there are only two of them 
$M_+$ and $M_-$ (which correspond to the 
two spinor bundles $\Sigma_+$ and $\Sigma_{-}$  on $Q_n$) if $n$ is even.

Morever an MCM module over $S/(f)$ corresponds to  a {\it matrix factorization} of the 
polynomial $f$  (such an equivalence is given by Eisenbud in [E], for more general 
hypersurfaces $(f)$), which is a pair $(\phi, \psi)$ of square matrices of polynomials, 
of the  same size, 
such that 
$\phi\cdot\psi = f\cdot id = \psi\cdot \phi$ and the MCM module is the 
cokernel of $\phi$.

Now  the matrix factorization $(\phi_n, \psi_n)$ for indecomposable bundles on 
$Q_n$ (see Langer [L], Section~2.2)
gives an exact sequence  of locally free sheaves  on $\P_k^{n+1}$
  
\begin{equation}\label{a1}0\longto \sO_{\P_k^N}(-2)^{2^{\lfloor n/2\rfloor +1}}\longby{\Phi_n}
\sO_{\P_k^N}(-1)^{2^{\lfloor n/2\rfloor +1}}\longto i_*\sS\longto 0,\end{equation}
$\sS = \Sigma $ and $\Phi = \phi_n = \psi_n$  for $n$ odd and 
$\sS = \Sigma_{+}\oplus \Sigma_{-}$ and $\Phi_n = \phi_n\oplus \psi_n$  for $n$ even. 
Since $\sS$ is supported on $Q_n$ it is sheaf on $Q_n$.  
Moreover  the above description  gives the short exact sequences of vector bundles on $Q_n$:
If $n$ odd
then 
$$ 0\longto \sS \longto \sO_{Q_n}^{2^{\lfloor n/2\rfloor+1}}\longto \sS(1)\longto 0.$$
If $n$ is even then 
$$ 0\longto \Sigma_{-} \longto \sO_{Q_n}^{2^{\lfloor n/2\rfloor}}\longto \Sigma_{+}(1)
\longto 0$$
and 
$$0\longto \Sigma_{+} \longto \sO_{Q_n}^{2^{\lfloor n/2\rfloor}}\longto \Sigma_{-}(1)
\longto 0.$$

We also have the natural  exact sequence 

\begin{equation}\label{a2}0\longto \sO_{\P_k^N}(-2)\longto 
\sO_{\P_k^N}\longto \sO_{Q_n}\longto 0.\end{equation}

We denote  
$$R_{p, n+1} = \frac{k[x_0, \ldots, x_{n+1}]}{(x_0^2+\cdots+ x_{n+1}^2]} =  
\oplus_{m\geq 0}H^0(Q_n, \sO_{Q_n}(m))\quad\mbox{and}\quad n\geq 3,$$ 
where  $k$ is a field of characteristic $p> 2$.
In particular  the $m^{th}$ graded component of $R_{p, n+1}$ is 
$H^0(Q_n, \sO_{Q_n}(m))$. We will be using the following set of equalities 
in our fothcoming computations.

$$\ell(R_{p, n+1})_m = h^0(Q_n, \sO_{Q_n}(m)) = h^0(\P_k^{n+1}, \sO_{\P_k^{n+1}}(m))-
h^0(\P_k^{n+1}, \sO_{\P_k^{n+1}}(m-2))$$
$$h^0(Q_n, \sS(m)) = 2\lambda_0\left[h^0(\P_k^{n+1}, \sO_{\P_k^{n+1}}(m-1))-
h^0(\P_k^{n+1}, \sO_{\P_k^{n+1}}(m-2))\right],$$

where $2\lambda_0 = 2^{\lfloor n/2\rfloor +1}$. 

By Serre duality ($\omega_{Q_n} = \sO_{Q_n}(-n)$ and $\sS^{\vee} = \sS(1)$)
$$h^n(Q_n, \sO(m)) = h^0(Q_n, \sO(-m-n))\quad\mbox{and}\quad 
 h^n(Q_n, \sS(m)) = h^0(Q_n, \sS(1-m-n)).$$
The rank of $Q_n$-bundle $\sS = \lambda_0$.

Now we recall other relevant facts from [A].

Since $\sO(a)$ and $\sS(a)$ are ACM bundles (also follows from (\ref{a1})), the  projection formula implies that  
$F_*^s(\sO(a))$ is an ACM bundle on $Q_n$. 
For $q=p^s$ and  $a\in \Z$, 

\begin{equation}\label{dc}F_*^s(\sO(a))
 = \oplus _{t\in \Z}\sO(t)^{\nu^s(t, a)} \oplus \oplus_{t\in \Z}
\sS(t)^{\mu^s(t, a)}.\end{equation}

Similarly 
\begin{equation}\label{dcs}F_*^s(\sS(a))
 = \oplus _{t\in \Z}\sO(t)^{{\tilde \nu}^s(t, a)} \oplus \oplus_{t\in \Z}
\sS(t)^{{\tilde \mu}^s(t, a)}.\end{equation}

Then  (see the proof of Theorem~1 of [A])  
considering the short exact sequence
$$0\longto \Omega_{\P^{n+1}_k}(1)\mid_Q\longto 
\oplus^{n+2}\sO_{Q_n} \longto \sO_{Q_n}(1)\longto 0,$$
where the second map is given by $(a_0, \ldots, a_{n+1})\to \sum a_ix_i$, we get 
$$0\longto H^0(Q_n, F^{s*}\Omega_{\P^{n+1}_k}(1)\tensor \sO(a)) 
\longto H^0(Q_n, \oplus F^{s*}\sO(a))\longby{\Psi_{a+q}} H^0(Q_n, \sO(a+q))\longto \cdots$$
This gives
$$\ell(\frac{R_{p, n+1}}{{\bf m}^{[q]}})_{a+q} = \ell(\coker~\Psi_{a+q}) = h^1(Q_n, 
F^{s*}\Omega_{\P^{n+1}_k}(1)\tensor \sO(a)) = h^1(Q_n, 
\Omega_{\P^{n+1}_k}\tensor F^s_*\sO(a+q)).$$
 Now by Lemma~1.2 in [A] we have 
$$h^1(Q_n, \Omega_{\P^{n+1}_k}(t)\mid_{Q_n}) = \delta_{t, 0}\quad and \quad 
h^1(Q_n, \sS\tensor \Omega_{\P^{n+1}_k}(t)\mid_{Q_n}) = 2^{\lfloor n/2\rfloor +1} \delta_{t, 1}.$$ 
Therefore, (replacing $a$ by $a-q$) we  have

\begin{equation}\label{*}\ell(R_{p, n+1}/{\bf m}^{[q]})_a = \coker~{\Psi_{a}}
= \nu^{s}(0, a) + 2\lambda_0 \mu^s(1, a),\end{equation}
where 
${\bf m} = (x_0, \ldots, x_{n+1})$.

We use this observation of [A],  for the computation of 
the HK density function $f_{R_{p, n+1}, {\bf m}}$.
Note that for any integer $m\geq 0$, there is an integer $i\geq 0$ such that 
$iq\leq m <(i+1)q$.
Hence by the  projection formula 
$$F_*^s(\sO(m)) = F^s_*(\sO(m-iq)\tensor F^{s*}(\sO(iq))) = 
F^s_*(\sO(m-iq))\tensor \sO(i).$$
   
 In particular  
\begin{equation}\label{e1}\ell (R_{p, n+1}/{\bf m}^{[q]})_m = 
\nu^s(-i, m-iq) +2\lambda_0 \mu^s({-i+1}, m-iq).\end{equation}

Therefore to know the HK density function 
$f_{R_{p, n+1}, {\bf m}}$, it is enough to 
compute the pair $\nu^s({-i}, a)+2\lambda_0\mu^s({-i+1}, a)$, 
for all $i$ and for $0\leq a<q$.

We also use the following   
 result of Achinger (Theorem~2 in [A]) which  determines, in terms of $s$, $a$ and $n$,
 when the numbers
$\nu^s(i, a)$ and $\mu^s(i, a)$ are nonzero in the decomposition of $F^s_*(\sO(a))$. 
Langer in [L] has given such formula for the occurance of line bundles in 
 the  Frobenius direct image. 

\vspace{5pt}

\noindent{\bf Theorem}\quad[A].\quad  {\it Let $p\neq 2$, $s\geq 1$ and $n\geq 3$ and 
$$F_*^s(\sO(a)) = \oplus _{t\in \Z}\sO(t)^{\nu^s(t, a)} \oplus \oplus_{t\in \Z}
\sS(t)^{\mu^s(t, a)}.$$
Then
\begin{enumerate}
\item $F_*^s(\sO(a))$ contains $\sO(t)$ if and only if $0\leq a-tq \leq n(q-1)$.
\item $F_*^s(\sO(a))$ contains $\sS(t)$ if and only if 
$$\left(\frac{(n-2)(p-1)}{2}\right)\frac{q}{p} \leq a-tq \leq
\left(\frac{(n-2)(p-1)}{2} + n-2 +p\right)\frac{q}{p} -n.$$
\item $F_*^s(\sS(a))$ contains $\sO(t)$ if and only if $1\leq a-tq \leq n(q-1)$.
\item $F_*^s(\sS(a))$ contains $\sS(t)$ if and only if 
$$\left(\frac{(n-2)(p-1)}{2}\right)\frac{q}{p} +1 -\delta_{s,1}\leq a-tq \leq
\left(\frac{(n-2)(p-1)}{2} + n-2 +p\right)\frac{q}{p} -n +\delta_{s,1}.$$
\end{enumerate}}

\section{Formula for  the pairs $\nu^s(i, a)  +2\lambda_0\mu^s({i+1}, a)$}

In the rest of the paper 
$$R_{p, n+1} = \frac{k[x_0, \ldots, x_{n+1}]}{(x_0^2+\cdots+ x_{n+1}^2]}
\quad\mbox{and}\quad Q_n = \mbox{Proj}~R_{p, n+1},$$
where $n\geq 3$ and $k$ is a perfect field of characteristic $p> 2$.

\begin{notations}\label{n1} \begin{enumerate}
\item  $\nu^s(-t, a) = \nu^s_{-t}(a)$ and $\mu^s(-t, a) = \mu^s_{-t}(a)$, where 
$$F_*^s(\sS(a))
 = \oplus _{t\in \Z}\sO(t)^{{\tilde \nu}^s(t, a)} \oplus \oplus_{t\in \Z}
\sS(t)^{{\tilde \mu}^s(t, a)}.$$
\item We also denote $\sO_{Q_n}(m))$ by $\sO(m)$.
\item   $h^0(Q_n, \sO(m)) = Y_m$ and $h^0(\P_k^{n+1}, 
\sO_{\P_k^{n+1}}(m)) = X_m$.
\item Let 
 $$n_0 = \lceil \frac{(n-2)(p-1)}{2p}\rceil\quad\mbox{and}\quad
\Delta = n_0 - \frac{(n-2)(p-1)}{2p}.$$

Now 
$$n \quad\mbox{even}\quad \implies \Delta = \frac{n-2}{2p}\quad\mbox{and}\quad
n \quad\mbox{odd}\quad \implies \Delta = \frac{n-2}{2p} + \frac{1}{2}.$$
\item
A spinor bundle is of {\it type} $t$ if it is isomorphic to $\sS(t)$.
We say two spinor bundles $\sS(t)$ and $\sS(t')$ are of {\it the same type} if $t= t'$.
\item 
An invariant such as 
$\nu^s_i(a)$ is {\it computable} if there exists  a polynomial 
$F_i(X, Y)\in \Q[X, Y]$  of degree $\leq n$ such that $\nu^s_i(a) = F_i(p^s, a)$ 
(similalry for $\mu^s_i(a)$).
\end{enumerate}
\end{notations}

\vspace{5pt}

In  the first lemma we prove that for sufficiently large $p$ (compare to $n$) 
there are atmost  three types of spinor bundles   in the decomposition 
of $F^s_*(a)$, for any $0\leq a <q$.  
Moreover for a fixed such an integer $a$, there are atmost two 
types of spinor bundles.
 
However one can not do better than this, because
if  the decomposition of $F^s_*(\sO(a))$ contains only one type of the spinor bundle then 
all the pairs $\nu^s_i(a)+ 2\lambda_0\mu^s_{i+1}(a)$  are computable
 as will be shown in Lemma~\ref{l2}. But then  the HK density function $f_{R_p, n+1}$ 
and therefore $e_{HK}(R_{p, n+1})$ will be independent of characteristic $p$, which 
 is a  contradiction due to the examples of [WY2].

\begin{lemma}\label{l1}If $0\leq a < q =p^s$ and $p >2$ then 
\begin{enumerate}
\item $F_*^s(\sO(a))$ contains $\sO(t)$ if and only if $t\in \{0, -1, \ldots, -n+1\}$.
Moreover
\item $F_*^s(\sO(a))$ contains $\sS(t)$ implies
 $$t\in \left\{-(n_0-1), -n_0, -(n_0+1), \ldots,   
-\left(n_0+\lceil\frac{n-2}{p}\rceil\right) \right\}.$$

In particular, if $n-2 \leq p$ 
then $n_0 = \lceil n/2\rceil-1$ and
$t \in \{-n_0-1, -n_0, -n_0+1\}$.
Moreover, 

\begin{enumerate}
\item if $n$ is even then 

\begin{enumerate}
\item $\mu^s_{-n_0-1}(a)\neq 0  \implies 0\leq a/q <  \frac{n-2}{2p}$
\item $\mu^s_{-n_0}(a)\neq 0 \implies 0\leq a/q < 1$
\item $\mu^s_{-n_0+1}(a)\neq 0 \implies 1-\frac{n-2}{2p} \leq a/q$.
\end{enumerate}

\item If $n$ is odd then 

\begin{enumerate}
\item $\mu^s_{-n_0-1}(a) = 0$
\item $\mu^s_{-n_0}(a)\neq 0 \implies 0\leq a/q <  \frac{1}{2}+\frac{n-2}{2p}$
\item $\mu^s_{-n_0+1}(a)\neq 0 \implies \frac{1}{2} - \frac{n-2}{2p} \leq a/q $.
\end{enumerate}

\end{enumerate}
\end{enumerate}

\end{lemma}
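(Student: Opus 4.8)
The statement is entirely a consequence of Achinger's Theorem (items (1)--(2) quoted above) together with elementary arithmetic of the floor/ceiling functions, so the plan is to unwind those inequalities in the regime $0\le a<q$. First I would prove part (1): by Achinger (1), $F^s_*(\sO(a))$ contains $\sO(t)$ iff $0\le a-tq\le n(q-1)$. Dividing by $q$ and using $0\le a/q<1$, the left inequality $tq\le a$ forces $t\le 0$, and the right inequality $a\le tq+n(q-1)$ forces $tq\ge a-n(q-1)>-nq$, i.e. $t>-n$, hence $t\ge -n+1$. Conversely every $t\in\{0,-1,\dots,-n+1\}$ satisfies both inequalities for all such $a$ (check the extreme values $t=0$ and $t=-n+1$ against $a=0$ and $a=q-1$). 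This gives the ``if and only if''.

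Next, for part (2), I would feed the interval from Achinger (2) into the definition of $n_0$ and $\Delta$ from Notations~\ref{n1}(4). The condition is
$$\Bigl(\tfrac{(n-2)(p-1)}{2}\Bigr)\tfrac{q}{p}\le a-tq\le\Bigl(\tfrac{(n-2)(p-1)}{2}+n-2+p\Bigr)\tfrac{q}{p}-n.$$
Since $0\le a<q$, the left inequality $-tq\le a-\bigl(\tfrac{(n-2)(p-1)}{2}\bigr)\tfrac qp<q-\bigl(\tfrac{(n-2)(p-1)}{2}\bigr)\tfrac qp$ gives $-t<1+\tfrac{(n-2)(p-1)}{2p}$; combined with $-t$ an integer this is $-t\le n_0$ (using that $n_0=\lceil\tfrac{(n-2)(p-1)}{2p}\rceil$, so $n_0-1<\tfrac{(n-2)(p-1)}{2p}$ and $1+\tfrac{(n-2)(p-1)}{2p}>n_0$ exactly when $\Delta>0$, which holds here since $p\nmid(n-2)(p-1)$ for $p>2$, $n>2$ — or one argues $n_0\le\tfrac{(n-2)(p-1)}{2p}+1$ directly). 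Wait: I should instead show $-t\le n_0+1$; rechecking, the bound is $-t\le \tfrac{(n-2)(p-1)}{2p}+1 = n_0+\Delta'$ with $\Delta'\le 1$, giving $-t\le n_0+1$, hence $t\ge -(n_0+1)$. For the other end, the right inequality gives $-tq\ge \bigl(\tfrac{(n-2)(p-1)}{2}+n-2+p\bigr)\tfrac qp-n-a > \bigl(\tfrac{(n-2)(p-1)}{2}+n-2\bigr)\tfrac qp - n$ (using $a<q$ and $(p)\tfrac qp=q$), so $-t > \tfrac{(n-2)(p-1)}{2p}+\tfrac{n-2}{p}-\tfrac nq \ge n_0-1+\tfrac{n-2}{p}-\tfrac nq$, whence $-t\ge n_0-1$ for $q$ large (more carefully: $-t\ge n_0+\lceil\tfrac{n-2}{p}\rceil - \text{(small)}$ — I would track the exact ceiling to land on the displayed range $t\in\{-(n_0-1),\dots,-(n_0+\lceil(n-2)/p\rceil)\}$). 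The specialization to $n-2\le p$ is then immediate: $\lceil\tfrac{n-2}{p}\rceil=1$, so $t\in\{-n_0-1,-n_0,-n_0+1\}$, and $n_0=\lceil\tfrac{(n-2)(p-1)}{2p}\rceil=\lceil\tfrac{n-2}{2}-\tfrac{n-2}{2p}\rceil$ which equals $\lceil\tfrac{n-2}{2}\rceil=\lceil n/2\rceil-1$ when $0<\tfrac{n-2}{2p}<1$ (true for $0<n-2<2p$, in particular for $n-2\le p$).

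For the refined implications (a)(i)--(iii) and (b)(i)--(iii), I would plug $t=-n_0-1,\ -n_0,\ -n_0+1$ one at a time into Achinger (2) and solve for $a/q$, using $\Delta=\tfrac{n-2}{2p}$ ($n$ even) or $\Delta=\tfrac{n-2}{2p}+\tfrac12$ ($n$ odd) from Notations~\ref{n1}(4). For instance with $t=-n_0+1$ the defining inequality's lower bound becomes $\tfrac{(n-2)(p-1)}{2p}\le a/q+(n_0-1)$, i.e. $a/q\ge \tfrac{(n-2)(p-1)}{2p}-n_0+1 = 1-\Delta$, which is $1-\tfrac{n-2}{2p}$ when $n$ is even and $\tfrac12-\tfrac{n-2}{2p}$ when $n$ is odd — matching (a)(iii) and (b)(iii). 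Similarly $t=-n_0$ gives the upper bound $a/q < \Delta+(\text{lower-order in }1/q)$ — here one must be slightly careful whether the ``$-n$'' term and the $\tfrac qp$ scaling shift the endpoint, but since we only claim an implication ($\mu\ne0\Rightarrow$ bound) a non-strict-to-strict slack is harmless; this yields $a/q<1$ in case (a)(ii) (because for $n$ even the raw bound $\Delta+(n_0+1)$ on $a/q$ combined with $a/q<1$ is what survives) and $a/q<\tfrac12+\tfrac{n-2}{2p}$ in (b)(ii). Finally (b)(i) $\mu^s_{-n_0-1}(a)=0$ for $n$ odd: plugging $t=-n_0-1$ makes the lower bound of Achinger (2) read $\tfrac{(n-2)(p-1)}{2p}\le a/q-(n_0+1)$, i.e. $a/q\ge \tfrac{(n-2)(p-1)}{2p}+n_0+1 > 1$, impossible since $a<q$ — so no such $t$ occurs.

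\textbf{Main obstacle.} The only delicate point is bookkeeping the endpoints: Achinger's inequalities carry the asymmetric correction terms ``$+n-2$'', ``$+p$'' and ``$-n$'' together with the factor $\tfrac qp$, and one must verify that after dividing by $q$ these contribute only $O(1/p)$ or $O(1/q)$ corrections that do not push $t$ out of the claimed finite range, nor change the stated half-open endpoints for $a/q$. I would handle this by always reducing to the integrality of $t$ (so $O(1/q)$ terms are absorbed) and by separating the two parities via the explicit value of $\Delta$; the strict-versus-weak inequalities for $a/q$ are then forced by $0\le a<q$ together with the fact that $a/q$ takes only values in $\tfrac1q\Z$. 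No deeper input than the quoted Theorem of [A] is needed.
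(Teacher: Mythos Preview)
Your approach is essentially the same as the paper's: both reduce entirely to Achinger's Theorem plus elementary arithmetic with $n_0$ and $\Delta$. The paper is slightly more streamlined because it immediately substitutes $(n-2)(p-1)/(2p)=n_0-\Delta$ into Achinger's inequality~(2), obtaining
\[
0\le \frac{a}{q}+\Delta-t-n_0\le \frac{n-2}{p}+1-\frac{n}{q},
\]
from which the range $n_0-1\le -t\le n_0+\lceil(n-2)/p\rceil$ and the three sub-cases $-t=n_0-1,n_0,n_0+1$ drop out without the back-and-forth you go through in your second paragraph.

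One small correction: your ``converse'' for part~(1) is not literally true. For $t=-n+1$ and $a=q-1$ one has $a-tq=nq-1>n(q-1)$, so $\sO(-n+1)$ does \emph{not} appear in $F^s_*(\sO(q-1))$. The paper's ``if and only if'' is correspondingly loose (its entire proof of (1) is ``just restating~[A]''), and only the ``only if'' direction is ever used downstream, so this does not affect anything substantive.
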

\begin{proof}The assertion~(1) is just restating the assertion (1) of [A].

By the  assertion~(2) of [A], if $\sS(t)$ occurs in $F^s_*(\sO(a))$ then 
$$ (n_0- \Delta)q \leq a -tq \leq  (n_0- \Delta)q + (n-2)q/p + q -n.$$
$$ (n_0- \Delta) \leq a/q -t \leq  (n_0- \Delta) + (n-2)/p + 1 -n/q.$$

Hence $$0\leq \frac{a}{q} + \Delta -t-n_0 \leq \frac{n-2}{p} + 1 -\frac{n}{q}.$$
 Now $n_0-1 \leq -t$ as $a/q+\Delta < 2$.
On the other hand
$$-t-n_0 -1 \leq \frac{n-2}{p}-\frac{n}{q}\implies -t-n_0-1 < \frac{n-2}{p}
\leq \lceil\frac{n-2}{p}\rceil $$
This implies $-t\leq n_0+\lceil\frac{n-2}{p}\rceil.$

This proves  assertion~(2):
$n_0-1
\leq -t \leq \lceil {(n-2)}/{p}\rceil + n_0.$

In particular   $n-2 \leq p$ implies  $-t\in \{n_0-1, n_0, n_0+1\}$. 

 Now the  rest  of the assertion follows
from the following three possibilities:
\begin{enumerate}
\item If $-t = n_0-1$ then we 
have $0\leq a/q +\Delta -1 \leq 1 + \frac{n-2}{p}-\frac{n}{q} < 1 + \frac{n-2}{p}$.
\item If $-t = n_0$ then we 
have $0\leq a/q +\Delta  \leq 1 + \frac{n-2}{p}-\frac{n}{q} < 1 + \frac{n-2}{p} $.
\item If $-t = n_0+1$ then we 
have $0\leq a/q +\Delta  \leq  \frac{n-2}{p}-\frac{n}{q} < \frac{n-2}{p}$.
\end{enumerate}

\end{proof}

\begin{rmk}If $n$ is even and $p\geq n-2$ then  
$F^s_*(\sO(a))$ as atmost three  types of spinor bundles .
If $0\leq a<q$, then they all belong to the set 
$\{\sS(-n_0+1), \sS(-n_0), \sS(-n_0-1)\}$ (we will see that  the  only first two are 
relevant for our computation). 
Moreover for a given choice of such an integer $a$, there are atmost two 
types of spinor bundles, namely 
$\sS(-n_0+1)$, $\sS(-n_0)$ or $\sS(-n_0)$, $\sS(-n_0-1)$.

If $n$ is odd  and $p\geq n-2$ and $0\leq a <q$  then 
the only possible   spinor bundles are  
$\sS(-n_0+1)$, $\sS(-n_0)$.
\end{rmk}

\begin{notations}\label{n2} Let $0\leq a < q= p^s$ be an  integer.
We define iteratively, $Z^s_{-i}(a)$ for $0\leq i \leq n_0+2$
 as follows.
$$\mbox{Let}\quad Z^s_0(a) = Y_a,\quad Z^s_{-1}(a) = Y_{a+q}-Y_1Y_a$$
 and let 
$$Z^s_{-i}(a) = Y_{a+iq} - \left[Y_1Z^s_{-i+1}(a)+Y_2Z^s_{-i+2}(a)+\cdots
+Y_iZ^s_0(a)\right].$$
 
Similarly 
we define iteratively, $L^s_{-i}(a)$ for $n_0+1\leq i \leq n-1$ 
 as follows.

Let $L^s_{-n+1}(a) = Y_{q-a-n}$ and
for $n_0+1\leq i < n-1$ we denote 
 $$ L^s_{-i}(a) = Y_{(n-i)q-a-n}-\left[Y_{n-i-1}L^s_{-n+1}(a)+
\cdots + Y_1L^s_{-i-1}(a)\right].$$ 
\end{notations}

\begin{rmk}\label{r3} By construction 
 $$Z^s_{-i}(a) = r_{i0}Y_a + r_{i1}Y_{a+q}+ \cdots  + r_{i(i-1)}Y_{a+(i-1)q}+Y_{a+iq}$$
and $$L^s_{-i}(a) = s_{i0}Y_{q-a-n} + s_{i1}Y_{2q-a-n}+ \cdots  + 
s_{i(n-i-2)}Y_{(n-i-1)q-a-n}+Y_{(n-i)q-a-n},$$
where $\{r_{ij}, s_{ik}\}_{j, k}$ are some rational numbers independent of $s$ and $a$.
On the other hand for an integer $m\geq 0$ 
$$Y_m = \binom{m+n+1}{n+1}-\binom{m+n-1}{n+1} = \frac{2m^n}{n!}+O(m^{n-1}).$$

Hence both $Z_{-i}^s(a)$ and $L_{-i}^s(a)$  are computable in the sense of 
Notations~\ref{n1}.\end{rmk}

The following lemma 
implies that except for $i = n_0+1$ and $i = n_0+2$ 
all the pairs  $\nu^s_{-i}(a)+2\lambda_0\mu^s_{-i+1}(a)$ are computable.

\begin{lemma}\label{l2} If $p\geq n-2$ is an odd prime and $n\geq 3$. Then for given 
$1\leq s$ and $0\leq a <q = p^s$  
\begin{enumerate}

\item $\nu^s_{-i}(a)+2\lambda_0\mu^s_{-i+1}(a) = Z^s_{-i}(a)$, if $0\leq i \leq n_0$.\\\

\item $\nu^s_{-n_0-1}(a)+2\lambda_0\mu^s_{-n_0}(a) = 
{{Z^s}}_{-n_0-1}(a) + 2\lambda_0\mu^s_{-n_0+1}(a)$.\\\

\item $\nu^s_{-n_0-2}(a)+2\lambda_0\mu^s_{-n_0-1}(a) = 
{{Z^s}}_{-n_0-2}(a) + 2\lambda_0(Y_1-Y_0)\mu^s_{-n_0+1}(a) +
 2\lambda_0\mu^s_{-n_0}(a)$.\\\

\item $\nu^s_{-i}(a)+2\lambda_0\mu^s_{-i+1}(a) = L^s_{-i}(a)$, if  $n_0+3\leq i\leq n-1$.\\\

\item $\nu^s_{-n_0-2}(a) = L^s_{-n_0-2}(a)$.\\\

\item $\nu^s_{-n_0-1}(a) = L^s_{-n_0-1}(a) - 2\lambda_0\mu^s_{-n_0-1}(a)$.\\\

\item $\nu^s_{-i}(a) = 0$, for $i\geq n$ and $\mu^s_{-j}(a) =0$ if 
$j\not\in \{n_0+1, n_0, n_0-1\}$.

\end{enumerate}
\end{lemma}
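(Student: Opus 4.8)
\textbf{Proof proposal for Lemma~\ref{l2}.}
The plan is to run an induction on the twist $t$, at each stage using the short exact sequences from the matrix factorization together with Achinger's Theorem to identify which bundles can possibly occur. First I would record the two basic exact sequences on $Q_n$: the Koszul-type sequence $0\to\sO\to\oplus^{n+2}\sO(1)$ whose cokernel piece is what computes $\ell(R/{\bf m}^{[q]})$, and, more importantly here, the fact that $H^0(Q_n,\sO(a)\tensor(\text{stuff}))$ can be read off from the additive decomposition (\ref{dc}) of $F^s_*\sO(a)$. The key numerical input is: $h^0(Q_n,\sO(t))=Y_t$ (zero for $t<0$), and $h^0(Q_n,\sS(t))=2\lambda_0(X_{t-1}-X_{t-2})$ (zero for $t\le 0$), so that twisting the decomposition (\ref{dc}) by $\sO(jq)$ and taking global sections gives, for each $j\ge 0$,
$$
Y_{a+jq}=\sum_{t}\nu^s_{t}(a)\,Y_{t+j}\;+\;\sum_{t}\mu^s_{t}(a)\,h^0(Q_n,\sS(t+j)).
$$
By Lemma~\ref{l1}(1) the line-bundle twists $t$ that occur are exactly $0,-1,\dots,-n+1$, and by Lemma~\ref{l1}(2) (using $p\ge n-2$) the spinor twists are confined to $\{-n_0+1,-n_0,-n_0-1\}$, with the last one absent when $n$ is odd. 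So the displayed identities become a finite, essentially triangular, linear system relating the unknowns $\nu^s_{-i}(a),\mu^s_{-i}(a)$ to the known quantities $Y_{a+jq}$.

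Next I would solve this triangular system from the top down. Evaluating at $j=0$ gives $\nu^s_0(a)=Y_a=Z^s_0(a)$ (no spinor bundle has sections in degree $0$), which also forces $\mu^s_1(a)$ to contribute nothing there; evaluating at $j=1$ and subtracting $Y_1\nu^s_0(a)$ isolates $\nu^s_{-1}(a)+2\lambda_0\mu^s_0(a)$ — but $\sS(0)$ has no sections, so in fact one must be careful: the spinor with $h^0(\sS(t+1))\ne 0$ first appears when $t+1\ge 1$. The bookkeeping is: $h^0(Q_n,\sS(-i+j))\ne 0$ iff $j\ge i+1$. Thus for $1\le i\le n_0$, the coefficient $h^0(\sS(-i+i))=h^0(\sS(0))=0$, so at step $j=i$ the only spinor term that could appear, namely $\mu^s_{-i}(a)$, drops out, and the recursion collapses to exactly the definition of $Z^s_{-i}(a)$ in Notations~\ref{n2} — this is assertion (1). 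For $i=n_0+1$, at step $j=n_0+1$ the spinor $\sS(-n_0+1-1+ \cdots)$: here $h^0(\sS(0))=0$ kills $\mu^s_{-n_0-1}$ again, but now $\sS(-n_0+1)$ reappears with $h^0(\sS(-n_0+1+j))$; tracking the one surviving spinor contribution (which is $\mu^s_{-n_0+1}(a)$, because $-n_0+1 - (n_0+1) + \text{twist}$ lands in the range where $h^0\ne 0$ by one) yields the correction term $+2\lambda_0\mu^s_{-n_0+1}(a)$ in assertion (2); similarly at $j=n_0+2$ one picks up both $\mu^s_{-n_0+1}(a)$ (with its full $h^0(\sS(2))=2\lambda_0(Y_1-Y_0)$-weighted multiplicity) and $\mu^s_{-n_0}(a)$ (weighted by $h^0(\sS(1))=2\lambda_0$), giving assertion (3). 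The ``dual'' assertions (4),(5),(6) are obtained the same way but starting from the other end: using Serre duality $h^n(Q_n,\sO(m))=h^0(Q_n,\sO(-m-n))$ and $h^n(Q_n,\sS(m))=h^0(Q_n,\sS(1-m-n))$ applied to $F^s_*\sO(a)$ twisted down, equivalently running the recursion in the variable $Y_{(n-i)q-a-n}$; this produces $L^s_{-i}(a)$ by the same triangular elimination, with the spinor contributions surviving only for the $i$ near $n_0$. Assertion (7) is immediate from Lemma~\ref{l1}: $\sO(t)$ needs $0\le a-tq\le n(q-1)$, impossible for $t\le -n$ when $0\le a<q$, and the spinor constraint in Lemma~\ref{l1}(2) localizes $\mu^s$ to $\{n_0+1,n_0,n_0-1\}$.

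I expect the main obstacle to be the careful case analysis of \emph{exactly which} spinor twist survives at steps $j=n_0+1$ and $j=n_0+2$ — i.e. verifying that the $h^0$ of the various $\sS(t+j)$ are respectively $0$, $2\lambda_0$, or $2\lambda_0(Y_1-Y_0)$, and no others, so that the correction terms come out precisely as in (2) and (3). This hinges on pinning down the endpoints of the ranges in Achinger's Theorem modulo the integrality of $a,q,n_0$ and the value of $\Delta$ from Notations~\ref{n1}, and on using $p\ge n-2$ to guarantee $-t\in\{n_0-1,n_0,n_0+1\}$ with no overlap. A secondary point requiring care is matching the ``$Z$'' recursion and the ``$L$'' recursion at the overlap indices $i=n_0+1,n_0+2$ — assertions (5) and (6) express $\nu^s_{-n_0-2}$ and $\nu^s_{-n_0-1}$ purely in $L$-terms, and one must check these are consistent with (2),(3) via the relation $\mu^s_{-n_0-1}(a)$ being the genuinely characteristic-dependent quantity that the two descriptions pin down together. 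Once the combinatorics of the ranges is settled, each identity is a one-line consequence of taking cohomology of (\ref{dc}) twisted by an appropriate power of $\sO(1)$.
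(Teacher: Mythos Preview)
Your proposal is correct and follows essentially the same route as the paper: tensor the decomposition $F^s_*\sO(a)=\bigoplus\sO(t)^{\nu^s_t(a)}\oplus\bigoplus\sS(t)^{\mu^s_t(a)}$ by $\sO(j)$, take $H^0$ to get a triangular system in the $\nu$'s and $\mu$'s with right-hand side $Y_{a+jq}$, solve top-down using $h^0(\sS(t))=0$ for $t\le 0$ to obtain the $Z$-identities (1)--(3), then dually twist down and take $H^n$ via Serre duality to obtain the $L$-identities (4)--(6), with (7) read off directly from Lemma~\ref{l1}. One small wording slip: you twist by $\sO(j)$, not $\sO(jq)$ (the $jq$ on the left comes from the projection formula $F^s_*\sO(a)\otimes\sO(j)\cong F^s_*\sO(a+jq)$); and your phrase ``the only spinor term that could appear, namely $\mu^s_{-i}(a)$'' is slightly off, since the only spinor twists present are $-n_0+1,-n_0,-n_0-1$ rather than all $-i$ --- but your conclusion is right, because for $j\le n_0-1$ all three of $\sS(-n_0+1+j),\sS(-n_0+j),\sS(-n_0-1+j)$ have vanishing $h^0$.
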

\begin{proof}
We fix $0\leq a<q = p^s$. Then, by Lemma~\ref{l1}
$$\begin{array}{lcl}
F^s_*(\sO(a)) & = & \sO(-n+1)^{\nu^s_{-n+1}(a)}\oplus\cdots\oplus
\sO(-1)^{\nu^s_{-1}(a)}\oplus \sO^{\nu^s_0(a)}\\\
& & \oplus \sS(-n_0+1)^{\mu^s_{-n_0+1}(a)}\oplus 
\sS(-n_0)^{\mu^s_{-n_0}(a)}\oplus \sS(-n_0-1)^{\mu^s_{-n_0-1}(a)}.\end{array}$$

Tensoring the above equation by $\sO(i)$ and by projection formula, we get

$$\begin{array}{lcl}
F^s_*(\sO(a+iq)) & = & \sO(i-n+1)^{\nu^s_{-n+1}(a)}\oplus\cdots\oplus
\sO(i-1)^{\nu^s_{-1}(a)}\oplus \sO(i)^{\nu^s_{0}(a)}\\\
& & \oplus \sS(i-n_0+1)^{\mu^s_{-n_0+1}(a)}\oplus
\sS(i-n_0)^{\mu^s_{-n_0}(a)}\oplus 
\sS(i-n_0-1)^{\mu^s_{-n_0-1}(a)}.\end{array}$$

Applying the the functor $H^0(Q_n, -)$ we get
\begin{equation}\label{**}
\begin{split}
 \nu^s_0(a)  = &  Y_a = Z^s_0(a)\\
\nu^s_{-1}(a) = &   Y_{a+q} - Y_1Y_a = Z^s_{-1}(a)\\
\end{split}
\end{equation}
In general, for $i<n_0$, 
$$Y_{a+iq} = Y_0\nu^s_{-i}(a) + Y_1\nu^s_{-i+1}(a) + \cdots + Y_i\nu^s_0(a)
$$
which implies
$$\nu^s_{-i}(a) =  \nu^s_{-i}(a)+2\lambda_0\mu^s_{-i+1}(a)  = Z^s_{-i}(a). $$

For $i= n_0$
$$Y_{a+n_0q} = Y_0\nu^s_{-n_0}(a) + Y_1\nu^s_{-n_0+1}(a) + \cdots + Y_{n_0}\nu^s_0(a)
+ 2\lambda_0\mu^s_{-n_0+1}(a)$$
$$ = \nu^s_{-n_0}(a) + Y_1Z^s_{-n_0+1}(a) + \cdots + Y_{n_0}Z^s_0(a)
+ 2\lambda_0\mu^s_{-n_0+1}(a)$$

$$\implies 
\nu^s_{-n_0}(a)+2\lambda_0\mu^s_{-n_0+1}(a) = Z^s_{-n_0}(a)$$
This proves assertion~(1).

For $i= n_0+1$
$$Y_{a+(n_0+1)q} = Y_0\nu^s_{-n_0-1}(a) + Y_1\nu^s_{-n_0}(a) + \cdots + Y_{n_0+1}\nu^s_0(a)
+2\lambda_0\left[(Y_1-Y_0)\mu^s_{-n_0+1}(a)+ \mu^s_{-n_0}(a)\right].$$ 

$$\implies 
\nu^s_{-n_0-1}(a)+2\lambda_0\mu^s_{-n_0}(a) = Z^s_{-n_0-1}(a) + 
2\lambda_0\mu^s_{-n_0+1} $$
This proves assertion~(2).

For $i= n_0+2$
$$\begin{array}{lcl}
Y_{a+(n_0+2)q} & = &  Y_0\nu^s_{-n_0-2}(a) + Y_1\nu^s_{-n_0-1}(a) + \cdots + 
Y_{n_0+2}\nu^s_0(a)\\\\
& & +2\lambda_0(X_2-X_1)\mu^s_{-n_0+1}(a)+2\lambda_0(X_1-X_0)\mu^s_{-n_0}(a)+
2\lambda_0\mu^s_{-n_0-1}(a).\end{array}$$ 
This implies
$$ \nu^s_{-n_0-2}(a)+2\lambda_0\mu^s_{-n_0-1}(a) = Z^s_{-n_0-2}(a) + 
2\lambda_0(Y_1-Y_0)\mu^s_{-n_0+1}(a)+2\lambda_0\mu_{-n_0}(a).$$
This proves assertion~(3).

Now we tensor the decomposition of $F^s_*(\sO(a))$ by $\sO(-j)$
 and apply the functor $H^n(Q_n, -)$.
 By  duality 
$h^n(Q_n, \sO_{Q_n}(m)) = h^0(Q_n, \sO_{Q_n}(-m-n)) = Y_{-m-n}$, hence  

$$Y_{jq-a-n} = Y_{j-1}\nu_{-n+1}^s(a)+ Y_{j-2}\nu_{-n+2}^s(a)+\cdots +
Y_{0}\nu_{-n+j}^s(a)+\mu_{-n_0+1}h^0(Q_n, S(n_0+j-n))$$
$$+\mu_{-n_0}h^0(Q_n, S(n_0+j+1-n))+
\mu_{-n_0-1}h^0(Q_n, S(n_0+j+2-n)).$$
Hence   $1\leq j \leq n-n_0-2 $, we get 
$$Y_{jq-a-n} = Y_{j-1}\nu_{-n+1}^s(a)+ Y_{j-2}\nu_{-n+2}^s(a)+\cdots +
Y_0\nu_{-n+j}^s(a).$$

Hence 
$$\nu_{-n+j} = Y_{jq-a-n} -\left[Y_{j-1}\nu^s_{-n+1}(a) + \cdots +
Y_2\nu^s_{-n+j-2}(a)+Y_1\nu^s_{-n+j-1}(a)\right] = L_{-n+j}.$$

\begin{equation}\label{*l}
\begin{split}
\mbox{For}\quad  j = 1,\quad \nu^s_{-n+1}(a)  = & Y_{q-a-n},\\
\mbox{For}\quad  j = 2,\quad \nu^s_{-n+2}(a)  = & Y_{2q-a-n} - Y_1\nu^s_{-n+1}(a)
\end{split}\end{equation}
In general

$$\begin{array}{lcl}
\nu^s_{-i}(a)  & = & \nu^s_{-i}(a)+2\lambda_0\mu^s_{-i+1}(a) =  L^s_{-i}(a),\quad{for}\quad  
n_0+3 \leq i \leq n-1\\\
\nu^s_{-n_0-2}(a) & = &  L^s_{-n_0-2}(a).\end{array}$$
This proves assertions~(4) and (5).

\vspace{5pt}

For $j = n-n_0-1$ we get
$$Y_{(n-n_0-1)q-a-n} = Y_{n-n_0-2}\nu^s_{-n+1}(a)+ \cdots  + Y_0\nu^s_{-n_0-1}(a) + 
2\lambda_0\mu^s_{-n_0-1}(a).$$

\begin{equation}\label{n_0+1}
\nu^s_{-n_0-1}(a) = L^s_{-n_0-1}(a) - 2\lambda_0\mu^s_{-n_0-1}(a).\end{equation}
This proves assertion~(6) and hence the lemma.
\end{proof}

\begin{rmk}\label{r2} By the above set of eualities it follows that, for a given $a$,
if  there is at the most
one type of spinor bundle in the decomposition of $F^s_*(\sO(a))$ then 
all the pairs $\nu^s_{-i}(a)+2\lambda_0\mu^s_{-i+1}(a)$ are computable.

In the next lemma we use this observation to 
classify  the $(a, q)$  for which all the pairs are computable. It is  enough to check 
this for the pairs
$\nu^s_{-n_0-1}(a)+2\lambda_0\mu^s_{-n_0}(a)$ and 
$\nu^s_{-n_0-2}(a)+2\lambda_0\mu^s_{-n_0-1}(a)$, as, by Lemma~\ref{l2},
 rest of the  other pairs are computable.
\end{rmk}

\begin{lemma}\label{l3}Let $0\leq a < q=p^s$, where $p> 2$ and $n\geq 3$. 
\begin{enumerate}
\item If $n$ is even and $p\geq n-2$ then

\begin{enumerate}
\item $\nu^s_{-n_0-1}(a)+2\lambda_0\mu^s_{-n_0}(a)
 = Z^s_{-n_0-1}(a),\quad\mbox{if}\quad
 \mbox{$0\leq \frac{a}{q} < 1-\frac{(n-2)}{2p}$}.$

\item $\nu^s_{-n_0-2}(a)+2\lambda_0\mu^s_{-n_0-1}(a)$   $$=  \begin{cases}
{{Z^s}}_{-n_0-2}(a)  +  2\lambda_0\mu^s_{-n_0}(a) & \mbox{if}\quad
0\leq \frac{a}{q} < \frac{(n-2)}{2p}.\\  
\nu^s_{-n_0-2}(a) = L^s_{-n_0-2}(a) &\mbox{if}\quad \frac{(n-2)}{2p} 
\leq \frac{a}{q}.\end{cases}$$
\end{enumerate}

\noindent{In} particular all the pairs are 
computable for
 $a/q \in [0, 1)\setminus [0, \frac{n-2}{p})\cup [1-\frac{n-2}{p}, 1)$.

\vspace{5pt}
 
\item If  $n$ is odd and  $p\geq 2n-4$  
then 
\begin{enumerate}
\item  $\nu^s_{-n_0-1}(a)+2\lambda_0\mu^s_{-n_0}(a)$  
$$ = \begin{cases} Z^s_{-n_0-1}(a) &
\mbox{if}~~ 0\leq \frac{a}{q} < \frac{1}{2}-\frac{(n-2)}{2p}\\
  \nu^s_{-n_0-1}(a) = 
L^s_{-n_0-1}(a) &~~\mbox{if}~~ \frac{1}{2}+\frac{(n-2)}{2p} \leq \frac{a}{q} < 1.\end{cases}$$   
\item $\nu^s_{-n_0-2}(a)+2\lambda_0\mu^s_{-n_0-1}(a)   = 
\nu^s_{-n_0-2}(a) =  L^s_{-n_0-2}(a).$
\end{enumerate}

\vspace{5pt}

\noindent{In} particular all the pairs are 
computable for
 $a/q\in [0, 1)\setminus [\frac{1}{2}-\frac{n-2}{2p}, \frac{1}{2}+\frac{n-2}{2p})$
 \end{enumerate}
\end{lemma}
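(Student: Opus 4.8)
The approach is essentially a bookkeeping argument: each claimed identity is obtained by taking the exact formula from Lemma~\ref{l2} with the matching index and deleting the spinor-multiplicity terms whose vanishing is forced, for $a$ in the prescribed range, by Lemma~\ref{l1}(2). First I would fix $0\leq a<q=p^s$ and recall from the proof of Lemma~\ref{l2} the decomposition of $F^s_*(\sO(a))$; by Lemma~\ref{l1} the only spinor summands that can occur are $\sS(-n_0+1)$, $\sS(-n_0)$ and $\sS(-n_0-1)$, and for $n$ odd the last of these never occurs. Everything then reduces to deciding, given $a$, which of $\mu^s_{-n_0+1}(a)$, $\mu^s_{-n_0}(a)$, $\mu^s_{-n_0-1}(a)$ vanish.

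Take $n$ even and $p\geq n-2$. For (1)(a) I would start from the identity $\nu^s_{-n_0-1}(a)+2\lambda_0\mu^s_{-n_0}(a)=Z^s_{-n_0-1}(a)+2\lambda_0\mu^s_{-n_0+1}(a)$ of Lemma~\ref{l2}(2); the hypothesis $a/q<1-\frac{n-2}{2p}$, via the contrapositive of the implication governing $\mu^s_{-n_0+1}$ in Lemma~\ref{l1}(2), gives $\mu^s_{-n_0+1}(a)=0$, hence the asserted equality. For (1)(b) I would start from $\nu^s_{-n_0-2}(a)+2\lambda_0\mu^s_{-n_0-1}(a)=Z^s_{-n_0-2}(a)+2\lambda_0(Y_1-Y_0)\mu^s_{-n_0+1}(a)+2\lambda_0\mu^s_{-n_0}(a)$ of Lemma~\ref{l2}(3). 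If $a/q<\frac{n-2}{2p}$ then also $a/q<1-\frac{n-2}{2p}$ --- this is exactly where $p\geq n-2$ is used, since that hypothesis is equivalent to $\frac{n-2}{2p}\leq 1-\frac{n-2}{2p}$ --- so $\mu^s_{-n_0+1}(a)=0$ and the first branch of (1)(b) follows. If instead $a/q\geq\frac{n-2}{2p}$, then the implication governing $\mu^s_{-n_0-1}$ in Lemma~\ref{l1}(2) forces $\mu^s_{-n_0-1}(a)=0$, so the left-hand side collapses to $\nu^s_{-n_0-2}(a)$, which equals $L^s_{-n_0-2}(a)$ by Lemma~\ref{l2}(5).

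Now take $n$ odd and $p\geq 2n-4$. Here $\mu^s_{-n_0-1}(a)=0$ for every $a$ by Lemma~\ref{l1}(2), so (2)(b) is immediate: the pair equals $\nu^s_{-n_0-2}(a)$, which is $L^s_{-n_0-2}(a)$ by Lemma~\ref{l2}(5). For (2)(a) I would again use $\nu^s_{-n_0-1}(a)+2\lambda_0\mu^s_{-n_0}(a)=Z^s_{-n_0-1}(a)+2\lambda_0\mu^s_{-n_0+1}(a)$ from Lemma~\ref{l2}(2): on $a/q<\frac{1}{2}-\frac{n-2}{2p}$ the term $\mu^s_{-n_0+1}(a)$ vanishes by the implication for $\mu^s_{-n_0+1}$ in Lemma~\ref{l1}(2), giving $Z^s_{-n_0-1}(a)$; on $\frac{1}{2}+\frac{n-2}{2p}\leq a/q<1$ the term $\mu^s_{-n_0}(a)$ vanishes by the implication for $\mu^s_{-n_0}$ there, so the left-hand side is $\nu^s_{-n_0-1}(a)$, which by Lemma~\ref{l2}(6) equals $L^s_{-n_0-1}(a)-2\lambda_0\mu^s_{-n_0-1}(a)=L^s_{-n_0-1}(a)$.

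Finally, the ``in particular'' assertions. By Remark~\ref{r2} together with Lemma~\ref{l2}, all the pairs $\nu^s_{-i}(a)+2\lambda_0\mu^s_{-i+1}(a)$ are computable once the two pairs with $i=n_0+1$ and $i=n_0+2$ are, and the formulas just established present these two pairs as $Z^s_{-i}(a)$'s or $L^s_{-i}(a)$'s --- hence computable by Remark~\ref{r3} --- on exactly the stated parts of $[0,1)$. For $n$ even this yields computability on $[\frac{n-2}{2p},\,1-\frac{n-2}{2p})$, which contains the complement in $[0,1)$ of $[0,\frac{n-2}{p})\cup[1-\frac{n-2}{p},1)$ (note this complement is empty, hence the assertion vacuous, precisely when $p\leq 2n-4$); for $n$ odd the second pair is computable for all $a$ and the first is computable off $[\frac{1}{2}-\frac{n-2}{2p},\,\frac{1}{2}+\frac{n-2}{2p})$, which is the claim. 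I do not expect a real obstacle here: the content lies entirely in Lemmas~\ref{l1} and~\ref{l2}, and the only care needed is to keep the half-open endpoints of the vanishing ranges of Lemma~\ref{l1}(2) aligned with the half-open intervals in the statement, and to notice that the hypothesis $p\geq n-2$ (respectively $p\geq 2n-4$) is precisely what makes the case splits exhaust $[0,1)$.
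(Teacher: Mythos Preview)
Your argument is correct and follows essentially the same route as the paper's own proof: both reduce each identity to an instance of Lemma~\ref{l2} by killing the relevant spinor multiplicities via the contrapositives of the implications in Lemma~\ref{l1}(2). Your write-up is in fact a bit more explicit than the paper's---you isolate exactly where the hypothesis $p\geq n-2$ enters (to ensure $\frac{n-2}{2p}\leq 1-\frac{n-2}{2p}$), and you spell out the ``in particular'' clauses, noting that for $n$ even the range of computability you actually obtain, namely $[\frac{n-2}{2p},\,1-\frac{n-2}{2p})$, is slightly larger than the range stated.
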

\begin{proof}Let $\delta =1$ if $n$ is even and $\delta = 1/2$ if $n$ is odd.

\vspace{5pt}

\begin{enumerate}
\item {If} $0\leq a/q <\delta -(n-2)/2p$ then $\mu^s_{-n_0+1}(a)=0$.
Hence the assertion 
$$\nu^s_{-n_0-1}(a)+2\lambda_0\mu^s_{-n_0}(a) = Z^s_{-n_0-1}(a)$$ 
follows from Lemma~\ref{l2}~(2).

\item {If} $\delta + (n-2)/2p \leq a/q <1$ (which holds only if $n$ is odd) then $\mu^s_{-n_0-1}(a) = 0$ and $\mu^s_{-n_0}(a) = 0$ 
hence by Lemma~\ref{l2}~(6)
$$\nu^s_{-n_0-1}(a)+2\lambda_0\mu^s_{-n_0}(a) = 
\nu^s_{-n_0-1}(a) = L^s_{-n_0-1}(a).$$

\item{If} $0\leq a/q < (n-2)/2p$ then, by Lemma~\ref{l1}~(2), 
  $\mu^s_{-n_0+1}(a) = 0$ and hence 
the equality 
$$\nu^s_{-n_0-2}(a)+2\lambda_0\mu^s_{-n_0-1}(a) = 
{{Z^s}}_{-n_0-2}(a)  +  2\lambda_0\mu^s_{-n_0}(a)$$
follows from Lemma~\ref{l2}~(3).

\item{If} $(n-2)/2p \leq a/q <1$ or if $n$ is odd then $\mu^s_{-n_0-1}(a) = 0$. Hence 
by Lemma~\ref{l2}~(5), we have the equality 
$\nu^s_{-n_0-2}(a)+2\lambda_0\mu^s_{-n_0-1}(a)   =  L^s_{-n_0-2}(a)$.

\end{enumerate}
\end{proof}

\vspace{5pt}

Proposition~5.2 of [L] states that 
if $s=1$ then $F_*(\sO(a))$ and $F_*\sS(a))$ both have  atmost one type of spinor bundle, 
Here, using explicit formulation of [A] we write them down  explicitly.
In particular all the pairs and $\nu^1_i(a)$, $\mu^1_i(a)$, ${\tilde \nu}^1_i(a)$, 
and ${\tilde \mu}^1_i(a)$  are computable, where ${\tilde \nu}^1_i(a) = 
{\tilde \nu}^1(i, a)$ and ${\tilde \mu}^1_i(a) = 
{\tilde \mu}^1(i, a)$
are the integers as in (\ref{dcs}).

\begin{lemma}\label{F1}If $p\neq 2$ and $n=3$ then 
for $0\leq a <p$, we have 
$$F_*(\sO(a))  =  \begin{cases}
\sO(-2)^{\nu^1_{-2}(a)}\oplus
\sO(-1)^{\nu^1_{-1}(a)}\oplus \sO^{\nu^1_0(a)}
\oplus \sS(-1)^{\mu^s_{-1}(a)},& \mbox{if}\quad a\leq \frac{p-1}{2}-{2}\\\
 
\sO(-2)^{\nu^1_{-2}(a)}\oplus
\sO(-1)^{\nu^1_{-1}(a)}\oplus \sO^{\nu^1_0(a)},&  \mbox{if}\quad a = \frac{p-1}{2}-1\\\

\sO(-2)^{\nu^1_{-2}(a)}\oplus
\sO(-1)^{\nu^1_{-1}(a)}\oplus \sO^{\nu^1_0(a)}
\oplus \sS(0)^{\mu^s_{0}(a)},& \mbox{if}\quad a\geq \frac{p-1}{2}.\end{cases}$$

Moreover $4\mu_{-1}^1(a) = Y_{a+2p}-Y_1Y_{a+p}+(Y_1^2-Y_2)Y_a -Y_{p-a-3}$, if 
$a\leq \frac{p-1}{2}-{2}$.

Also 

$$F_*(\sS(a))  =  \begin{cases}
\sO(-2)^{\tilde{\nu}^1_{-2}(a)}\oplus
\sO(-1)^{\tilde{\nu}^1_{-1}(a)}\oplus \sO^{\tilde{\nu}^1_0(a)}
\oplus \sS(-1)^{\tilde{\mu}^s_{-1}(a)},& \mbox{if}\quad a\leq \frac{p-1}{2}-{1}\\\
\sO(-2)^{\tilde{\nu}^1_{-2}(a)}\oplus
\sO(-1)^{\tilde{\nu}^1_{-1}(a)}\oplus \sO^{\tilde{\nu}^1_0(a)}
\oplus \sS(0)^{\tilde{\mu}^s_{0}(a)},& \mbox{if}\quad a\geq \frac{p-1}{2}.\end{cases}$$

Moreover, if 
$a\leq \frac{p-1}{2}-{1}$ then 
$$4{\tilde {\mu}}_{-1}^1(a) = h^0(Q_3, \sS(a+2p))- Y_1h^0(Q_3, \sS(a+p))
+(Y_1^2-Y_2)h^0(Q_3, \sS(a))- h^0(Q_3, \sS(p-a-2)).$$

In general, for any given  $n\geq 3$ and $0\leq a<p$ the bundle
 $F_*(\sO(a))$ (similarly  $F_*(\sS(a))$) can not contain 
both $\sS(t)$ and $\sS(t')$, where $t\neq t'$.
\end{lemma}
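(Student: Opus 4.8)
**Proof proposal for Lemma 3.9 (final statement: $F_*(\sO(a))$ has at most one type of spinor bundle when $s=1$).**

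The plan is to read off the claim directly from assertion (2) of Achinger's theorem, specialized to $s=1$, i.e. to $q=p$. For $s=1$ the inequality characterizing when $\sS(t)$ occurs in $F_*(\sO(a))$ becomes
\[
\frac{(n-2)(p-1)}{2} \;\le\; a-tp \;\le\; \frac{(n-2)(p-1)}{2} + (n-2) + p - n,
\]
since $q/p = 1$. First I would observe that the width of this interval in the variable $a-tp$ is
\[
\Bigl(\tfrac{(n-2)(p-1)}{2} + (n-2) + p - n\Bigr) - \tfrac{(n-2)(p-1)}{2} = (n-2) + p - n = p-2 \;<\; p.
\]
The key point is that if both $\sS(t)$ and $\sS(t')$ occurred with $t\ne t'$, then both $a-tp$ and $a-t'p$ would lie in this interval; but those two numbers differ by a nonzero multiple of $p$, hence by at least $p$, contradicting that the interval has length $p-2<p$. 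This immediately gives the last sentence of the lemma, the general $n\ge 3$ case. The same bookkeeping applies verbatim to $F_*(\sS(a))$ using assertion (4) of Achinger's theorem: for $s=1$ the $\delta_{s,1}$ terms contribute, widening the interval to length $(p-2)+2 = p$, so one must be slightly more careful — but the endpoints are then attained only at the extremes, and a short check (the interval is a half-open or closed interval of length exactly $p$, and $a-tp$, $a-t'p$ differing by a multiple of $p$ cannot both be interior) still forces $t=t'$; alternatively one cites Proposition 5.2 of [L] directly for this half.

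Next I would pin down, in the case $n=3$, exactly which single type $\sS(t)$ appears as a function of $a$, in order to produce the three-case display. Here $n_0 = \lceil (p-1)/2p \rceil$; for $n=3$ and odd $p$ this is $n_0 = 1$, and by Lemma 3.5(2) the only candidates are $\sS(-n_0+1)=\sS(0)$, $\sS(-n_0)=\sS(-1)$, $\sS(-n_0-1)=\sS(-2)$. For $0\le a<p$ the occurrence condition for $\sS(t)$ with $t=-1$ reads $\tfrac{p-1}{2}\le a+p\le \tfrac{p-1}{2}+p+1-3$, i.e. $-p/2\le a \le p/2 - 2$, so $\sS(-1)$ appears precisely when $a\le \tfrac{p-1}{2}-2$ (using that $a$ is an integer and $p$ odd). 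For $t=0$ the condition is $\tfrac{p-1}{2}\le a\le \tfrac{p-1}{2}+p-2$, which, intersected with $0\le a<p$, is $a\ge \tfrac{p-1}{2}$. For $t=-2$ one checks the condition $\tfrac{p-1}{2}\le a+2p\le \tfrac{p-1}{2}+p-2$ forces $a<0$, so $\sS(-2)$ never occurs for $a$ in range; this is consistent with Lemma 3.5(2)(b)(i) ($\mu^s_{-n_0-1}(a)=0$ for $n$ odd). The leftover integer $a=\tfrac{p-1}{2}-1$ falls in neither interval, giving the middle (no-spinor) case. The line-bundle exponents $\nu^1_{-i}(a)$ and the decomposition structure then come from Lemma 3.5(1) (which says $\sO(t)$ occurs iff $0\le a-tp\le n(p-1)$, i.e. for $n=3$ exactly $t\in\{0,-1,-2\}$ when $0\le a<p$).

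Finally, the two "Moreover" formulas for $4\mu^1_{-1}(a)$ and $4\tilde\mu^1_{-1}(a)$: once we know $F_*(\sO(a)) = \sO(-2)^{\nu^1_{-2}}\oplus\sO(-1)^{\nu^1_{-1}}\oplus\sO^{\nu^1_0}\oplus\sS(-1)^{\mu^1_{-1}}$ for $a\le\tfrac{p-1}{2}-2$, I would extract $\mu^1_{-1}(a)$ by the same cohomological elimination used in Lemma 3.7. Tensoring by $\sO(i)$ and applying $H^0(Q_3,-)$ for $i=0,1,2$ expresses $Y_a, Y_{a+p}, Y_{a+2p}$ in terms of the $\nu$'s and $\mu^1_{-1}(a)$ (note $h^0(Q_3,\sS(i-1)) = 0$ for $i=0,1$ and $=2\lambda_0 = 4$ for $i=2$, since $2\lambda_0 = 2^{\lfloor 3/2\rfloor+1}=4$); back-substituting $\nu^1_0 = Y_a$, $\nu^1_{-1} = Y_{a+p}-Y_1Y_a$, $\nu^1_{-2}=Y_{a+2p}-Y_1\nu^1_{-1}-Y_2\nu^1_0$ and collecting gives $4\mu^1_{-1}(a) = Y_{a+2p}-Y_1Y_{a+p}+(Y_1^2-Y_2)Y_a - \nu^1_{-2}(a)$, and one more application of Serre duality ($H^3$ of the $\sO(-j)$-twist, as in equation (3.?) of Lemma 3.7) identifies $\nu^1_{-2}(a) = Y_{p-a-3}$. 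The identical argument with $\sS(a)$ in place of $\sO(a)$, using $h^0(Q_3,\sS\otimes\sS(\cdot))$ values from the given cohomology table, yields the $\tilde\mu^1_{-1}(a)$ formula. The main obstacle I anticipate is purely bookkeeping: getting the half-open versus closed interval endpoints and the integrality rounding exactly right so that the case boundaries $a = \tfrac{p-1}{2}-2$, $a=\tfrac{p-1}{2}-1$, $a=\tfrac{p-1}{2}$ land as stated — there is no conceptual difficulty, only a careful accounting of floors, the parity of $p$, and the $-n$ (here $-3$) correction term in Achinger's bound.
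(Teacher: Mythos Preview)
Your proposal is correct and follows essentially the same approach as the paper: apply Achinger's occurrence criterion at $s=1$, observe the admissible interval for $a-tp$ has width $<p$, then extract the multiplicities by tensoring with $\sO(i)$ and taking $H^0$ (and $H^n$ via Serre duality for $\nu^1_{-2}$). One small arithmetic slip: for $F_*(\sS(a))$ at $s=1$ the $\delta_{s,1}$ correction widens the interval by only $1$ (the lower endpoint is unchanged since $+1-\delta_{s,1}=0$), so the width is $p-1$, not $p$; with this correction your hedge and the appeal to [L] become unnecessary, exactly as in the paper's one-line argument.
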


\begin{proof}We first prove the last assertion for $n\geq 3$. By Theorem~2 of [A], $F_*\sO(a))$ contains $\sS(t)$ if and only if 
$$\frac{(n-2)(p-1)}{2}\leq a-tp \leq \frac{(n-2)(p-1)}{2} + (p-2).$$

Since the difference between the maximum and minimum is $\leq p-1$,
 there can not be two different $t$ and $t'$ satisfying 
such equation.
Similar assertion holds for $F_*(\sS(a))$
$F_*(\sS(a))$ contains $\sS(t)$ if and only if 
$$\frac{(n-2)(p-1)}{2}\leq a-tp \leq \frac{(n-2)(p-1)}{2} + (p-1).$$

It is easy to work out  $n=3$ case. The formula for ${\tilde \mu}^1_{-1}(a)$ can 
be worked out as follows: 

Let $a < (p-1)/2$. Tensoring the equation 
$$F_*(\sS(a))  =  
\sO(-2)^{\tilde{\nu}^1_{-2}(a)}\oplus
\sO(-1)^{\tilde{\nu}^1_{-1}(a)}\oplus \sO^{\tilde{\nu}^1_0(a)}
\oplus \sS(-1)^{\tilde{\mu}^s_{-1}(a)}$$
by $\sO(i)$ we get 
$$F_*(\sS(a+ip))  =  
\sO(i-2)^{\tilde{\nu}^1_{-2}(a)}\oplus
\sO(i-1)^{\tilde{\nu}^1_{-1}(a)}\oplus \sO(i)^{\tilde{\nu}^1_0(a)}
\oplus \sS(i-1)^{\tilde{\mu}^s_{-1}(a)}.$$
Applying the functor $h^0(Q_n, -)$ for $i=0, 1$ and $2$
Now we have ${\tilde{\nu}^1_0(a)} = h^0(Q, \sS(a))$, ${\tilde{\nu}^1_{-1}(a)}=
h^0(Q, \sS(a+p))- Y_1{\tilde{\nu}^1_0(a)}$ and 
$$4{\tilde{\mu}^1_{-1}(a)} = h^0(Q, \sS(a+2p))- \left[Y_2{\tilde{\nu}^1_0(a)}
+Y_1{\tilde{\nu}^1_{-1}(a)} + {\tilde{\nu}^1_{-2}(a)}\right].$$

\end{proof}

\section{The HK density function $f_{R_{p, n+1}, {\bf m}}$ and $f_{R^{\infty}_{n+1}, 
{\bf m}}$}

\begin{rmk}\label{n3}
Let $Z_{-i}^s(a)$ and $L_{-i}^s(a)$ be the numbers as Notations~\ref{n2}.
Then we can write 
$$Z^s_{-i}(a) = \sum_{j=0}^ir_{ij}Y_{a+jq}
  \quad\mbox{and}\quad 
L^s_{-i}(a) = \sum_{j =0}^{n-i-1}s_{ij}Y_{(j+1)q-a-n}$$
where $\{r_{ij}, s_{ik}\}_{j, k}$
are rational numbers independent  of $a$ and $s$,
Now if $x\geq 0$ such that $xq_0\in \Z_{\geq 0}$ for some $q_0 = p^{s_0}$ and  
if $i$ is the integer such that $0\leq xq_0-iq_0 < q_0$ then
$\lim_{q\to \infty} (xq-iq)/q = x-i$.
 This observation implies that, if we define the functions ${\bf Z_{-i}}$ and 
${\bf L_{-i}}$ on the interval $[i, i+1)$ by 
$${\bf Z_{-i}}(x) := \lim_{q\to \infty}\frac{Z^s_{-i}(\lfloor xq\rfloor-iq)}{q^n} 
\quad\mbox{and}\quad
{\bf L_{-i}}(x) := \lim_{q\to \infty}\frac{L^s_{-i}(\lfloor xq\rfloor-iq)}{q^n}.$$ 
 then we have
$${\bf Z_{-i}}(x) = 
\frac{2}{n!}\left[r_{i0}(x-i)^n+r_{i1}(x-i+1)^n+\cdots +r_{ii}(x)^n\right]
$$
and 
$${\bf L_{-i}}(x) = 
\frac{2}{n!}\left[s_{i0}(i+1-x)^n+s_{i1}(i+2-x)^n+\cdots +s_{i(n-i-1)}(n-x)^n\right].
$$
\end{rmk}

\begin{lemma}\label{l4}\begin{enumerate}
\item If $n\geq 4$ is  an even number and $p\geq n-2$ and $p\neq 2$ 
Then 

$$f_{R_{p, n+1}}(x) =$$
 $$ \begin{cases}  {\bf Z_{-i}}(x), \quad\mbox{if}\quad 
 i\leq x < i+1 \quad\mbox{and}\quad 0\leq i \leq  n_0\\\\
{\bf  Z}_{-n_0-1}(x),\quad\mbox{if}\quad  
(n_0+1) \leq x < (n_0+2) -\frac{n-2}{2p}\\\\
 {\bf  Z}_{-n_0-1}(x)+2\lambda_0\displaystyle{\lim_{q\to \infty}
\frac{\mu^s_{-n_0+1}(\lfloor xq\rfloor -(n_0+1)q)}{q^n}},
\quad\mbox{if\quad  $1-\frac{n-2}{2p}\leq x -(n_0+1)< 1$} \\\\
{\bf  Z}_{-n_0-2}(x)+ 2\lambda_0\displaystyle{\lim_{q\to \infty}
\frac{\mu^s_{-n_0}((\lfloor xq\rfloor -(n_0+2)q)}{q^n}},
\quad\mbox{if\quad  $0\leq x -(n_0+2) < \small{\frac{n-2}{2p}}$}\\\\
 {\bf L}_{-n_0-2}(x),\quad\mbox{if}\quad  
(n_0+2)+\frac{n-2}{2p} \leq x < (n_0+3)\\\\
  {\bf L}_{-i}(x),\quad\mbox{if}\quad 
 i\leq x < i+1 \quad\mbox{and}\quad n_0+3\leq i < n\end{cases}$$
\vspace{5pt}

and $f_{R_p, n+1}(x) = 0$ otherwise.

\vspace{10pt}

\item If $n\geq 3$ is  an odd number and $2n-4 \leq p$ and $p\neq 2$ then 

$$f_{R_p, n+1}(x)  = $$
$$\begin{cases} {\bf Z_{-i}}(x) 
 \quad\mbox{if}\quad 
 i\leq x < i+1 \quad\mbox{and}\quad 0\leq i \leq  n_0\\\\
 {\bf  Z}_{-n_0-1}(x), \quad\mbox{if}\quad  
(n_0+1) \leq x < (n_0+\frac{3}{2}) -\frac{n-2}{2p}\\\\
{\bf  Z}_{-n_0-1}(x)+2\lambda_0\displaystyle{\lim_{q\to \infty}
\frac{\mu^s_{-n_0+1}(\lfloor xq\rfloor -(n_0+1)q)}{q^n}},\quad
\mbox{if\quad ${ \frac{1}{2}-\frac{n-2}{2p}\leq x -(n_0+1) < 
\frac{1}{2}+\frac{n-2}{2p}}$}\\\\
{\bf L}_{-n_0-1}(x),\quad\mbox{if}\quad  
(n_0+1)+\frac{1}{2}+\frac{n-2}{2p} \leq x < (n_0+2)\\\\
{\bf L}_{-i}(x),\quad\mbox{if}\quad 
 i \leq x < i+1 \quad\mbox{and}\quad n_0+2\leq i <  n
\end{cases}$$
\vspace{5pt}

and $f_{R_p, n+1}(x) = 0$ otherwise.
\end{enumerate}
\end{lemma}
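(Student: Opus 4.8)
The plan is to derive both formulas directly from the formula (\ref{e1}) for $\ell(R_{p,n+1}/{\bf m}^{[q]})_m$ together with the pair‑computations of Lemmas~\ref{l2} and \ref{l3}, by taking the limit $f_{R_{p,n+1},{\bf m}}(x)=\lim_{s\to\infty}\frac{1}{q^n}\ell(R_{p,n+1}/{\bf m}^{[q]})_{\lfloor xq\rfloor}$. First I would fix $x\geq 0$ with $x q_0\in\Z$ for some $q_0=p^{s_0}$ (such $x$ are dense, and the continuity of the HK density function lets us extend to all $x$ at the end), and let $i=i(x)$ be the unique integer with $i\leq x<i+1$; then $m=\lfloor xq\rfloor$ satisfies $iq\leq m<(i+1)q$ for $q=p^s$, $s\gg 0$, and (\ref{e1}) gives $\ell(R_{p,n+1}/{\bf m}^{[q]})_m=\nu^s_{-i}(a)+2\lambda_0\mu^s_{-i+1}(a)$ with $a=\lfloor xq\rfloor-iq$ and $0\leq a<q$, while $a/q\to x-i$. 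So the entire computation reduces to reading off, case by case in $i$ and in the position of $a/q$ inside $[0,1)$, the value of the pair $\nu^s_{-i}(a)+2\lambda_0\mu^s_{-i+1}(a)$ from Lemmas~\ref{l2} and \ref{l3}, dividing by $q^n$, and letting $q\to\infty$.

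Concretely, for $0\leq i\leq n_0$ Lemma~\ref{l2}(1) gives the pair as $Z^s_{-i}(a)$, whose normalized limit is ${\bf Z}_{-i}(x)$ by Remark~\ref{n3}; this yields the first line. For $i=n_0+1$ we split according to whether $\mu^s_{-n_0+1}(a)$ vanishes: by Lemma~\ref{l1}(2) (in the even case, item (a)(iii)) this is zero exactly when $a/q<1-\frac{n-2}{2p}$, so Lemma~\ref{l3}(1)(a) gives the pair as $Z^s_{-n_0-1}(a)$ — the second line — whereas for $1-\frac{n-2}{2p}\leq a/q<1$ Lemma~\ref{l2}(2) gives $Z^s_{-n_0-1}(a)+2\lambda_0\mu^s_{-n_0+1}(a)$, producing the third line with the explicit residual $2\lambda_0\lim_{q\to\infty}\mu^s_{-n_0+1}(\lfloor xq\rfloor-(n_0+1)q)/q^n$. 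For $i=n_0+2$ one uses Lemma~\ref{l3}(1)(b): when $a/q<\frac{n-2}{2p}$ the pair is $Z^s_{-n_0-2}(a)+2\lambda_0\mu^s_{-n_0}(a)$ (fourth line, again with a residual $\mu$‑term in the limit), and when $a/q\geq\frac{n-2}{2p}$ it collapses to $L^s_{-n_0-2}(a)$, whose limit is ${\bf L}_{-n_0-2}(x)$ — the fifth line. Finally for $n_0+3\leq i<n$ Lemma~\ref{l2}(4) gives $L^s_{-i}(a)$, yielding the last line, and for $i\geq n$ Lemma~\ref{l2}(7) gives $\nu^s_{-i}(a)=0$ and the pair vanishes, so $f_{R_{p,n+1}}\equiv 0$ there. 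The odd‑$n$ case (item (2)) is the same bookkeeping with $\delta=1/2$: the relevant breakpoint in $[0,1)$ for $i=n_0+1$ is $\frac12\pm\frac{n-2}{2p}$ (Lemma~\ref{l1}(2)(b)), Lemma~\ref{l3}(2)(a) supplies the three subcases around $x-(n_0+1)$, Lemma~\ref{l3}(2)(b) shows the $i=n_0+2$ pair is just $L^s_{-n_0-2}(a)$ (so there is no extra $\mu$‑line), and one needs $p\geq 2n-4$ rather than $p\geq n-2$ precisely so that the regions $[0,\frac12-\frac{n-2}{2p})$ and $[\frac12+\frac{n-2}{2p},1)$ are nonempty and the case analysis of Lemma~\ref{l3}(2) applies.

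Two routine points must be checked to glue the pieces into a function on all of $[0,\infty)$. One is that the piecewise definition is consistent at the breakpoints $x=i$ and $x=n_0+2\pm\frac{n-2}{2p}$, etc.; this follows because $f_{R_{p,n+1},{\bf m}}$ is a genuine continuous function (being an HK density function, by [T], [TW2]), so it suffices to define it on the dense set of $x$ with $xq_0\in\Z$ and extend by continuity — the limits above exist for all such $x$, and the resulting polynomial (in $x$) expressions from Remark~\ref{n3} automatically match up. The other is that on the ``difficult'' sub‑intervals the residual terms $2\lambda_0\lim_q \mu^s_{-n_0+1}(\cdots)/q^n$ and $2\lambda_0\lim_q \mu^s_{-n_0}(\cdots)/q^n$ are at least well defined: they equal $f_{R_{p,n+1}}(x)$ minus the already‑computed ${\bf Z}$‑terms, hence exist because the left side does. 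I expect the main obstacle to be purely organizational: keeping the index arithmetic straight — correctly translating ``$F^s_*(\sO(a))$ contains $\sS(t)$ iff \dots'' into statements about $\mu^s_{-n_0\pm 1}(a)$ for $a=\lfloor xq\rfloor-(n_0+1)q$ or $-(n_0+2)q$, and verifying that the interval endpoints in Lemma~\ref{l1}(2)(a)(iii), (b)(iii) (namely $1-\frac{n-2}{2p}$ resp.\ $\frac12-\frac{n-2}{2p}$, shifted by the tensoring index) line up exactly with the breakpoints asserted in the statement. There is no new mathematical input beyond Lemmas~\ref{l1}–\ref{l3} and Remark~\ref{n3}; the lemma is essentially their assembly.
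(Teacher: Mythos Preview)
Your proposal is correct and follows essentially the same route as the paper: fix $x$ in the dense set $\Z[1/p]$, reduce $\ell(R_{p,n+1}/{\bf m}^{[q]})_{\lfloor xq\rfloor}$ via (\ref{e1}) to the pair $\nu^s_{-i}(a)+2\lambda_0\mu^s_{-i+1}(a)$, read off each case from Lemmas~\ref{l2} and \ref{l3}, pass to the limit using Remark~\ref{n3}, and invoke continuity of the HK density function to extend. The paper's proof is exactly this case-by-case assembly (written out for $n$ even, with the odd case declared analogous); your added remarks on why the residual $\mu$-limits exist and on gluing at the breakpoints are correct and slightly more explicit than the paper.
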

\begin{proof}Let $q=p^s$ and $m\in \Z$ and
let $\nu^s_t(m)$ and $\mu^s_t(m)$ be the 
numbers occuring in   the decomposition 
$$F_*^s(\sO(m)) = \oplus _{t\in \Z}\sO(t)^{\nu^s_t(m)} \oplus \oplus_{t\in \Z}
\sS(t)^{\mu^s_t(m)}.$$

If  $m\geq 0$ is integer then there is  $i\geq 0$  an integer such that 
$0\leq m-iq < q$. Now, by (\ref{e1}), 
$$\ell (R_{p, n+1}/{\bf m}^{[q]})_{m} = \nu^s_0({a+iq})+
2\lambda_0\mu^s_1({a+iq}) =  
\nu^s_{-i}(a) +2\lambda_0 \mu^s_{-i+1}(a).$$

Here we write the details when $n$ is even, the case when $n$ is odd 
follows along the same lines.

Lemma~\ref{l2}~(1) gives
$$\ell (\frac{R_{p, n+1}}{{\bf m}^{[q]}})_{a+iq}  =  Z^s_{-i}(a)\quad\mbox{for 
every}\quad 0\leq i\leq n_0\quad\mbox{and for}\quad 0\leq a<q$$
By Lemma~\ref{l3} and Lemma~\ref{l2}~(2), we have ,
$$\begin{array}{lcl}
\ell (\frac{R_{p, n+1}}{{\bf m}^{[q]}})_{a+(n_0+1)q} & = & \begin{cases}
  Z^s_{-n_0-1}(a)&\quad\mbox{if}\quad  
0\leq a < q(1 -\frac{n-2}{2p})\\\\
 Z^s_{-n_0-1}(a)+
2\lambda_0\mu^s_{-n_0+1}(a) & \quad\mbox{if}\quad  
q-\frac{n-2}{2p}q\leq a < q \end{cases}\\\\
\ell (\frac{R_{p, n+1}}{{\bf m}^{[q]}})_{a+(n_0+2)q} & = & \begin{cases}
 Z^s_{-n_0-2}(a)+
2\lambda_0\mu^s_{-n_0}(a)&\quad\mbox{if}\quad  
0 \leq a <  \frac{n-2}{2p}q\\\\ 
L^s_{-n_0-2}(a) & \quad\mbox{if}\quad  
\frac{n-2}{2p}q\leq a < q \end{cases}
\end{array}$$
By Lemma~\ref{l2}~(4)
$$\ell(\frac{R_{p, n+1}}{{\bf m}^{[q]}})_{a+jq}  =  
L^s_{-j}(a)\quad\mbox{for every}\quad n_0+3\leq j\leq n-1\quad\mbox{and for}\quad 0\leq a<q.$$
and $\ell(\frac{R_{p, n+1}}{{\bf m}^{[q]}})_{m} = 0$ otherwise.

By definition
 $$f_{R_{p, n+1}}(x) =  \lim_{s\to \infty}\frac{1}{q^n}
\ell(R_{p, n+1}/{\bf m}^{[q]})_{\lfloor xq\rfloor}$$
and is a continuous function and the set 
$\{x\in \R\mid xq\in \Z,\quad\mbox{for some}\quad q=p^s\}$ is a dense of $\R$.
Hence the theorem follows from Remark~\ref{n3}.
\end{proof}

\begin{thm}\label{f0} The function 
  $f^{\infty}_{R_{n+1}}:[0, \infty)\longto [0,\infty)$ 
given by 
$$f^{\infty}_{R_{n+1}}(x) := \lim_{p\to \infty}f_{R_{p, n+1}}(x)$$
is partially symmetric continuous function, that is 
$$f^{\infty}_{R_{n+1}}(x) =  f^{\infty}_{R_{n+1}}(n-x),\quad\mbox{for}\quad 
0\leq x\leq (n-2)/2$$
and 
is described as follows:

\begin{enumerate}
\item If $n\geq 4$ is even then 
  $$f^{\infty}_{R_{n+1}}(x)  = \begin{cases}  {\bf Z_{-i}}(x)& \quad\mbox{if}\quad 
 i\leq x < i+1 \quad\mbox{and}\quad 0\leq i \leq  n_0+1\\\\
 {\bf L}_{-n_0-2}(x) & \quad\mbox{if}\quad  
(n_0+2)\leq x < (n_0+3)\\\\
 {\bf L}_{-i}(x) &\quad\mbox{if}\quad  
i\leq x < i+1 \quad\mbox{and}\quad n_0+3\leq i <n
\end{cases}$$
and $f^{\infty}_{R_{n+1}}(x) = 0$ otherwise.

\item If $n\geq 3$ is  an odd number then 
$$f_{R_p, n+1}(x)  = \begin{cases} 
  {\bf Z_{-i}}(x)& \quad\mbox{if}\quad 
 i\leq x < i+1 \quad\mbox{and}\quad 0\leq i \leq  n_0\\\\
 {\bf  Z}_{-n_0-1}(x) &\quad\mbox{if}\quad  
(n_0+1) \leq x < (n_0+\frac{3}{2})\\\\
{\bf L}_{-n_0-1}(x) & \quad\mbox{if}\quad  
(n_0+\frac{3}{2})\leq x < (n_0+2)\\\\
 {\bf L}_{-i}(x) &\quad\mbox{if}\quad  
i\leq x < i+1 \quad\mbox{and}\quad n_0+2\leq i <n
\end{cases}$$
and $f^{\infty}_{R_{n+1}}(x) = 0$ otherwise.
\end{enumerate}
\end{thm}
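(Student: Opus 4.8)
The plan is to read off $f^{\infty}_{R_{n+1}}$ from Lemma~\ref{l4} by letting $p\to\infty$. The first point is that once $p>n-2$ the integer $n_0=\lceil\frac{(n-2)(p-1)}{2p}\rceil$ is constant ($=\frac{n-2}{2}$ for $n$ even, $\frac{n-1}{2}$ for $n$ odd), and the polynomials ${\bf Z}_{-i}$ and ${\bf L}_{-i}$ of Remark~\ref{n3} are independent of $p$, since their coefficients $r_{ij},s_{ij}$ depend only on the Hilbert function $Y_m$ of $Q_n$. Thus Lemma~\ref{l4} exhibits $f_{R_{p,n+1}}$ as a fixed, $p$-independent spline $g$ --- exactly the function claimed in the theorem --- off a ``difficult'' interval $U_p$ of length $\frac{n-2}{p}$ centred at $x_0:=\frac{n+2}{2}$ (that is, $x_0=n_0+2$ for $n$ even and $x_0=n_0+\frac32$ for $n$ odd). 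Hence for every $x\neq x_0$ we have $f_{R_{p,n+1}}(x)=g(x)$ for all large $p$, so $f^{\infty}_{R_{n+1}}(x)=\lim_p f_{R_{p,n+1}}(x)$ exists and equals $g(x)$; this already determines $f^{\infty}_{R_{n+1}}$ on $[0,\infty)\setminus\{x_0\}$ and identifies it with the stated formula there.

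Next I would show that $g$ is continuous. Being piecewise polynomial, this reduces to matching adjacent pieces at the breakpoints. At every breakpoint other than $x_0$ the two neighbouring pieces of $g$ both coincide with the continuous function $f_{R_{p,n+1}}$ on a neighbourhood of that breakpoint for $p$ large, hence agree. The only genuine matching is at $x_0$, where $g$ passes from a ${\bf Z}$-piece to an ${\bf L}$-piece; writing ${\bf Z}_{-i},{\bf L}_{-i}$ out via Remark~\ref{n3} and using the generating identity $\sum_{m\geq 0}Y_m z^m=\frac{1+z}{(1-z)^{n+1}}$ (equivalently $\sum_m\rho_m z^m=\frac{(1-z)^{n+1}}{1+z}$, with $\rho_{i-j}=r_{ij}$) reduces the required equality to a classical cancellation: symmetry of the Eulerian numbers when $n$ is even, and vanishing of the Taylor coefficients of $(2\sinh t)^{\,n+2}$ up to order $n$ when $n$ is odd. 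The partial symmetry $g(x)=g(n-x)$ on $[0,\frac{n-2}{2}]$ comes out of the same structure: the recursion defining $L^s_{-i}$ is the Serre dual, via $h^n(\sO(m))=h^0(\sO(-m-n))=Y_{-m-n}$, of the one defining $Z^s_{-i}$, so both $\{r_{ij}\}$ and $\{s_{ij}\}$ are entries of the inverse of the \emph{same} lower-triangular Toeplitz matrix of $Y_m$'s; this forces $s_{ij}=r_{(n-i-1)j}$, hence ${\bf L}_{-i}(n-x)={\bf Z}_{-(n-i-1)}(x)$ and the claimed symmetry.

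It remains to treat the point $x_0$ and the integral. For the integral, each $f_{R_{p,n+1}}$ is supported in $[0,n]$ and uniformly bounded: the ${\bf Z}_{-i},{\bf L}_{-i}$ are bounded on $[0,n]$, and each correction term is $2\lambda_0$ times a limit of $\mu^s_t(\cdot)/q^n\le 1/\lambda_0$, because the spinor summands of $F^s_*\sO(a)$ have total rank $\le q^n$. Since $f_{R_{p,n+1}}\to g$ pointwise a.e.\ and boundedly, dominated convergence gives $\int_0^{\infty}f_{R_{p,n+1}}\to\int_0^{\infty}g$; as $\int_0^{\infty}f_{R_{p,n+1}}=e_{HK}(R_{p,n+1})\to 1+m_{n+1}$ by Gessel--Monsky, we get $\int_0^{\infty}g=1+m_{n+1}$. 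For the value at $x_0$, Lemma~\ref{l3} together with Lemma~\ref{l2} gives $f_{R_{p,n+1}}(x_0)=g(x_0)+2\lambda_0\lim_s\mu^s_{-n_0-1}(0)/q^n$, where $\sS(-n_0-1)$ is the spinor bundle occurring in $F^s_*\sO(a)$ only for $a/q$ in the collapsing window $[0,\frac{n-2}{2p})$. I would show the extra term tends to $0$ as $p\to\infty$ by tracking $\mu^s_{-n_0-1}(0)$ through $F^{s+1}_*(-)=F_*(F^s_*(-))$: $\sS(-n_0-1)$ enters only via summands $F_*\sO(j)$, $F_*\sS(j)$ in which it lies on the boundary of Achinger's occurrence range, and by the explicit formulas its multiplicity there is $O(p^{n-1})$ --- the leading $p^n$ term cancels by the very identity used for continuity at $x_0$ --- and this $O(1/p)$ gain persists along the recursion, giving $\lim_s\mu^s_{-n_0-1}(0)/q^n=O(1/p)$.

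The main obstacle is this last point: showing that the single ``defect'' spinor multiplicity at $x_0$ is negligible in the limit. It is the only place where the shrinking window $[0,\frac{n-2}{2p})$ genuinely matters, and it is where one must combine the inductive structure $F^{s+1}_*=F_*\circ F^s_*$ with the quantitative ``boundary occurrence $\Rightarrow$ lower-order multiplicity'' estimate. Everything else is either a direct passage to the limit in Lemma~\ref{l4} or a routine spline and generating-function computation.
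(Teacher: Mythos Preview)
Your core approach matches the paper's: both read off the explicit piecewise description from Lemma~\ref{l4} by letting the difficult interval collapse as $p\to\infty$, and both reduce the partial symmetry to the identity ${\bf Z}_{-j}(x)={\bf L}_{-(n-1-j)}(n-x)$. The paper proves this identity by a short induction on $j$, directly from the parallel recursions defining $Z^s_{-i}$ and $L^s_{-i}$ together with $\lim_{q}Y_a/q^n=\lim_{q}Y_{a+n}/q^n$; your observation that the $r_{ij}$ and the $s_{ij}$ are both entries of the inverse of the same lower--triangular Toeplitz matrix of $Y_m$'s (whence $s_{ij}=r_{(n-1-i)j}$) is exactly the same computation in matrix language.

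Where you diverge is in how much additional work you take on. The integral discussion and the appeal to Gessel--Monsky are not part of this theorem at all; in the paper they appear only later, in the proofs of Theorems~\ref{t1} and~\ref{t2}. More strikingly, what you flag as the ``main obstacle'' --- computing the pointwise limit at the single point $x_0=(n+2)/2$ by tracking $\mu^s_{-n_0-1}(0)$ through $F^{s+1}_*=F_*\circ F^s_*$ and extracting an $O(1/p)$ gain from a boundary--multiplicity cancellation --- is entirely absent from the paper's proof. The paper simply invokes Lemma~\ref{l4} to obtain the stated piecewise formula (valid off a set shrinking to $\{x_0\}$) and does not separately analyze either the value at $x_0$ or continuity there via generating functions and Eulerian numbers. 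Your route would be the more scrupulous one if one insists on verifying the pointwise limit literally at every point, but the recursive estimate you sketch is only an outline and would need real work to make precise (and, as written, is tailored to $n$ even); the paper's route is dramatically shorter and suffices for all the downstream uses, which concern integrals and hence are insensitive to a single point.
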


\begin{proof}The description of the function $f^{\infty}_{R_{n+1}}:[0, \infty)\longto 
[0, \infty)$ follows from Lemma~\ref{l4}.
To prove the symmetry, we consider the 
${\bf Z}_{-j}:[0, \infty)\longto [0, \infty)$ and 
${\bf L}_{-j}:[0, \infty)\longto [0\, \infty)$ 
and 
\vspace{5pt} 

\noindent{\bf Claim}.\quad ${\bf Z}_{-j}(x) = {\bf L}_{n-1-j}(n-x)$, if  $j\leq x < j+1$.

\vspace{5pt}

\noindent{\underline {Proof of the claim}}:\quad  By induction on $j\geq 0$, first 
we prove the assertion  that 
 $$\lim_{q\to \infty}Z^s_{-i}(a)/q^n = 
\lim_{q\to \infty}L^s_{-(n-1-i)}(q-a)/q^n\quad\mbox{for}\quad 0\leq a <q.$$

If $j=0$ then 
$$\lim_{q\to \infty}Z^s_{0}(a)/q^n = \lim_{q\to \infty}Y_a/q^n = 
\lim_{q\to \infty}Y_{a+n}/q^n = 
\lim_{q\to \infty}L^s_{-(n-1)}(q-a)/q^n.$$ 

Assume that the assertion holds for $0\leq j <i$.
Now 
$$\begin{array}{lcl}
\lim_{q\to \infty} Z_{-i}(a)/q^n & = & \lim_{q\to \infty}{Y_{a}}/{q^n}-
{\left[Y_1Z_{-i+1}(a)+\cdots+Y_iZ_0(a)\right]}/{q^n}\\\
 & = & \lim_{q\to \infty}{Y_{a+n}}/{q^n}-
{\left[Y_1L_{-(n-i)}(q-a)+\cdots+Y_iL_{-(n-1)}(q-a)\right]}/{q^n}\\\
& = & \lim_{q\to \infty} L_{-(n-1-i)}(q-a)/q^n.\end{array}$$

Now to prove the claim, it is enough to prove for $x = m/q$, where $m\in \Z_{\geq 0}$.
If  $j\leq x < j+1$ then $m = a+jq$, where $0\leq a<q$. Now 
$$\begin{array}{lcl}
{\bf Z}_{-j}(x) & = & \lim_{q\to \infty}Z^s_{-j}(m -jq)/q^n
= \lim_{q\to \infty}L^s_{-(n-1-j)}((j+1)q-m))/q^n\\\
& = & \lim_{q\to \infty}L^s_{-(n-1-j)}((n-m)q-(nq-q-jq))/q^n
 = {\bf L}_{-(n-1-j)}(n-x).\end{array}$$
This proves the claim.

\vspace{5pt}

\noindent{If} $n$ is even then $n_0 = n/2-1$.
Let  $0\leq x < (n-2)/2= n_0$ then $i \leq x <(i+1)$ for some $0\leq i\leq (n_0-1)$.
Now 
$$f^{\infty}_{R_{n+1}}(x) = {\bf Z}_{-i}(x) = {\bf L}_{-(n-1-i)}(n-x) = 
f^{\infty}_{R_{n+1}}(n-x).$$
 where the second equality follows as  
$n-(i+1) < n-x\leq n-i$.

\vspace{5pt} 

\noindent{If} $n$ is odd then $n_0 = (n-1)/2$.
Let $0\leq x <(n-2)/2= n_0-(1/2)$.

If $i\leq x <(i+1)$, where $i\leq n_0-1$ then 
$$f^{\infty}_{R_{n+1}}(x) = {\bf Z}_{-i}(x) = {\bf L}_{-(n-1-i)}(n-x) = 
f^{\infty}_{R_{n+1}}(n-x).$$
 If $(n_0-1)\leq x < n_0-1/2$ then again 
$$f^{\infty}_{R_{n+1}}(x) = {\bf Z}_{-(n_0-1)}(x) = {\bf L}_{-(n_0+1)}(n-x) = 
f^{\infty}_{R_{n+1}}(n-x).$$
\end{proof}

\begin{rmk}The same argument as above proves that $f_{R_p, n+1}$ is partially symmetric 
and the symmetry is given by 
$$f_{R_p, n+1}(x) = f_{R_p, n+1}(n-x)\quad \mbox{for}\quad 0\leq x \leq 
\frac{n-2}{2}\left(1-\frac{1}{p}\right).$$ 
\end{rmk}

\vspace{5pt}

\noindent{\underline {Proof of Theorem~\ref{t2}}}.\quad
If $n$ is even then $n_0 = \frac{n-2}{2}$ 
and the interval 
$$\left[n_0+2 -\frac{n-2}{2p},~~ n_0+2 +\frac{n-2}{2p}\right) = 
\left[\frac{n+2}{2}-\frac{n-2}{2p},~~\frac{n+2}{2}+\frac{n-2}{2p}\right)$$ 

If $n$ is odd then $n_0 = \frac{n-1}{2}$. and the interval 
$$\left[n_0+\frac{3}{2} -\frac{n-2}{2p},~~ n_0+\frac{3}{2} +\frac{n-2}{2p}\right) = 
\left[\frac{n+2}{2}-\frac{n-2}{2p},~~\frac{n+2}{2}+\frac{n-2}{2p}\right)$$

Note that, by Lemma~\ref{l4} and Theorem~\ref{f0}
$$f_{R_{p, n+1}}(x) = f_{R^{\infty}_{n+1}}(x)~~\mbox{ if}~~x\not\in 
\left[\frac{n+2}{2} -\frac{n-2}{2p},~~ \frac{n+2}{2} +\frac{n-2}{2p}\right).$$

Since both $f_{R_{p, n+1}}$ and $f_{R^{\infty}_{n+1}}$ are continuous 
functions on $\R$, it is enough to prove the rest of the assertion for $x\in \Z[1/p]$.
Now let $xq_0\in \Z$ 
for some $q_0 = p^{s_0}$. 

\begin{enumerate}
\item Let $n\geq 4$ be an even number with $n-2\leq p$.

\begin{enumerate}
\item Let $n_0+2 - \frac{n-2}{2p} \leq x < n_0+2$.
For  a fix $q \geq q_0$ let $a_q = xq -(n_0+1)q$. Then  
$0\leq a_q <q$ for all $q\geq q_0$ and by Lemma~\ref{l2}~(2)
$$\ell\left(\frac{R_{p, {n+1}}}{{\bf m}^{[q]}}\right)_{xq} = \nu^s_{-n_0-1}(a_q)+
2\lambda_0\mu^s_{-n_0}(a_q) 
= Z^s_{-n_0-1}+
2\lambda_0\mu^s_{-n_0+1}(a_q).$$
Hence $$f_{R_{p, n+1}}(x) = {\bf Z}_{-n_0-1}(x)+
\lim_{q\to \infty}2\lambda_0\frac{\mu^s_{-n_0+1}(a_q)}{q^n}$$
whereas
$f_{R^{\infty}_{n+1}}(x) = {\bf Z}_{-n_0-1}(x).$

\item  Let $n_0+2 \leq x < n_0+2 +\frac{n-2}{2p}$.
For a fix $q \geq q_0$ let $a_q = xq -(n_0+2)q$ then $0\leq a_q < q$ and, by 
Lemma~\ref{l2}~(5)
 $$\ell\left(\frac{R_{p, {n+1}}}{{\bf m}^{[q]}}\right)_{xq} = \nu^s_{-n_0-2}(a_q)+
2\lambda_0\mu^s_{-n_0-1}(a_q) 
= L^s_{-n_0-2}+
2\lambda_0\mu^s_{-n_0-1}(a_q).$$
Hence 
$$f_{R_{p, n+1}}(x) = {\bf L}_{-n_0-2}(x)+
\lim_{q\to \infty}2\lambda_0\frac{\mu^s_{-n_0-1}(a_q)}{q^n}$$
whereas 
$f_{R^{\infty}_{n+1}}(x) = {\bf L}_{-n_0-2}(x).$
\end{enumerate}

\item Let $n\geq 3$ be an odd number with $2n-4\leq p$.

\begin{enumerate}
\item Let $n_0+\frac{3}{2} - \frac{n-2}{2p} \leq x < n_0+\frac{3}{2} + \frac{n-2}{2p}$.

For  a fix $q = p^s\geq q_0$ let $a_q = xq -(n_0+1)q$. Then  
 $0\leq a_q <q$ and 
$$\begin{array}{lcl}
\ell\left(\frac{R_{p, {n+1}}}{{\bf m}^{[q]}}\right)_{xq} & = & \nu^s_{-n_0-1}(a_q)+
2\lambda_0\mu^s_{-n_0}(a_q) 
= Z^s_{-n_0-1}+
2\lambda_0\mu^s_{-n_0+1}(a_q)\\\
&  = &  L_{-n_0-1}(a_q)- 2\lambda_0\mu_{-n_0}(a_q),\end{array}$$
where the last equality follows as, by Lemma~\ref{l2}~(6)
and Lemma~\ref{l1}~(2)(b)(i)
$\nu^s_{-n_0-1}(a_q) = L_{-n_0-1}(a_q)$.

Hence we can write
$$\begin{array}{lcl}
f_{R_{p, n+1}}(x) & = & {\bf Z}_{-n_0-1}(x)+
\lim_{q\to \infty}2\lambda_0\frac{\mu^s_{-n_0+1}(a_q)}{q^n}\\\
& = & {\bf L}_{-n_0-1}(x)+
\lim_{q\to \infty}2\lambda_0\frac{\mu^s_{-n_0}(a_q)}{q^n}.\end{array}$$

Wheras 
$$f_{R^{\infty}_{n+1}}(x) = \begin{cases}
{\bf Z}_{-n_0-1}(x)\quad\mbox{if}\quad  n_0+\frac{3}{2} - \frac{n-2}{2p} \leq x 
< n_0+\frac{3}{2}\\\
{\bf L}_{-n_0-1}(x)\quad\mbox{if}\quad  n_0+\frac{3}{2}  \leq x 
< n_0+\frac{3}{2}+ \frac{n-2}{2p}.\end{cases}$$
\end{enumerate}
\end{enumerate}

This proves the theorem. $\Box$

\vspace{10pt}

\noindent{\underline {Proof of Theorem~\ref{t1}}}.\quad 
We note that, for any integer $0\leq a <q$ and $q=p^s$, we have 
the decomposition 
$$F_*^s(\sO(a)) =  \sum_{n_0-1}^0\sO(-i)^{\nu^s_{-i}(a)} \oplus\cdots 
\oplus\oplus \sum_{i = n_0-1}^{n_0+1}\sS(-i)^{\mu^s_{-i}(a)}.$$

By computing the ranks we get 
$q^n = \sum_{n_0-1}^0 \nu_{-i}^s(a)+ \sum_{i = n_0-1}^{n_0+1}\lambda_0\mu_{-i}^s(a)$.
In particular $0\leq \lambda_0{\mu_{-j}^s(a)}/{q^n}  
\leq 1$.

Therefore, by Lemma~\ref{l4} and  by the proof of Theorem~\ref{t2}
we have 
$$\begin{array}{l}
0\leq \int_0^\infty f_{R_{p, n+1}}(x)dx - \int_0^\infty f_{R^{\infty}_ {n+1}}(x)dx\\\\
 = \int_{\frac{n+2}{2}-\frac{n-2}{2p}}^{\frac{n+2}{2}+\frac{n-2}{2p}}
(f_{R_{p, n+1}}(x) - f_{R^{\infty}_ {n+1}}(x))dx \leq \frac{2n-4}{p}.\end{array}$$

On the other hand by Theorem~1.1 of [T] we have 
$$ e_{HK}({R_{p, n+1}}, {\bf m}) =   \int_0^{\infty} f_{{R_{p, n+1}}, {\bf m}}(x)dx.$$
This gives 
$$1+m_{n+1} =   \lim_{p\to \infty}e_{HK}({R_{p, n+1}}, {\bf m}) = 
\lim_{p\to \infty}\int_0^{\infty} f_{{R_p, n+1}, {\bf m}}(x)dx =  
\int_0^{\infty} f_{R^{\infty}_{n+1}, {\bf m}}(x)dx,$$
where  the first  equality follows by
 the result of Gessel-Monsky [GM], this can also be derived using Theorem~\ref{f0},
 in principle. $\Box$

\begin{cor}\label{c1}Let $p>2$.  
\begin{enumerate}
\item If $n$ even and $p\geq n-2$, or
\item if $n$ and $p\geq 2n-4$ 
\end{enumerate}
then the $F$-threshold of the ring
$R_{p ,n+1}$ is $c^{\bf m}({\bf m}) = n$.
\end{cor}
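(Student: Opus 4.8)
The plan is to read off the $F$-threshold $c^{\bf m}({\bf m})$ from the support of the HK density function $f_{R_{p,n+1}}$. Recall that for a standard graded ring $R$ with homogeneous maximal ideal ${\bf m}$, the $F$-threshold $c^{\bf m}({\bf m})$ equals $\sup\{x : f_{R,{\bf m}}(x) \neq 0\}$; this is exactly the right endpoint of the (compact) support of the HK density function, and this identification is one of the standard features relating $F$-thresholds to density functions (it holds in the generality of Lemma~\ref{l4}, where ${\bf m}^{[q]}$ colengths are being computed graded-piece by graded-piece). So the whole corollary reduces to identifying where $f_{R_{p,n+1}}$ is supported.

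First I would invoke Lemma~\ref{l4}. In the even case ($p\geq n-2$) the listed cases of $f_{R_{p,n+1}}(x)$ cover $x$ up to $i < n$, i.e. the last interval is $[n-1, n)$ with value ${\bf L}_{-i}(x)$ for $i = n-1$, and $f_{R_{p,n+1}}(x) = 0$ outside; similarly in the odd case ($p\geq 2n-4$) the last nonzero piece is again the interval $[n-1,n)$ with value ${\bf L}_{-(n-1)}(x)$. So in both cases the support of $f_{R_{p,n+1}}$ is contained in $[0, n]$, giving $c^{\bf m}({\bf m}) \leq n$. For the reverse inequality I need the density function to be genuinely nonzero arbitrarily close to $x = n$. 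By Remark~\ref{n3}, on $[n-1,n)$ we have ${\bf L}_{-(n-1)}(x) = \frac{2}{n!}\,s_{(n-1)0}\,(n-x)^n$, and unwinding Notations~\ref{n2}, $L^s_{-n+1}(a) = Y_{q-a-n}$, so $s_{(n-1)0} = 1$; hence ${\bf L}_{-(n-1)}(x) = \frac{2}{n!}(n-x)^n > 0$ for all $x \in [n-1, n)$. Therefore $n \in \overline{\{x : f_{R_{p,n+1}}(x) \neq 0\}}$, which forces $c^{\bf m}({\bf m}) \geq n$, and combined with the upper bound we get $c^{\bf m}({\bf m}) = n$.

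The main obstacle — and the only place where care is needed — is pinning down the precise relation between $c^{\bf m}({\bf m})$ and the support of $f_{R_{p,n+1}}$, rather than merely its integral (which is $e_{HK}$). One should note that $\ell(R_{p,n+1}/{\bf m}^{[q]})_{\lfloor xq\rfloor} = 0$ for $\lfloor xq \rfloor \geq nq$ (from assertion~(7) of Lemma~\ref{l2}, $\nu^s_{-i}(a) = 0$ for $i \geq n$ together with (\ref{e1})), which bounds the largest graded piece of $R/{\bf m}^{[q]}$ by roughly $nq$; this is exactly what gives $c^{\bf m}({\bf m}) = n$ at the level of the defining limit $\lim_q \max\{m : (R/{\bf m}^{[q]})_m \neq 0\}/q$, bypassing density functions entirely if one prefers. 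Either route is short once Lemma~\ref{l2} and Lemma~\ref{l4} are in hand; I would present the support-of-density-function argument as the main line and remark on the direct computation as an alternative. $\Box$
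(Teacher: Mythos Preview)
Your proof is correct and follows essentially the same route as the paper: identify $c^{\bf m}({\bf m})$ with the right endpoint of the support of $f_{R_{p,n+1}}$ (the paper cites Theorem~E of [TW1] for this), use Lemma~\ref{l4} to see the support is contained in $[0,n]$, and then compute ${\bf L}_{-(n-1)}(x)=\frac{2}{n!}(n-x)^n$ from $L^s_{-n+1}(a)=Y_{q-a-n}$ to see the function is strictly positive just below $n$. Your added remark giving a direct argument via Lemma~\ref{l2}~(7) and (\ref{e1}) is a reasonable alternative but not needed.
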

\begin{proof}  By Theorem~E of [TW1],  
the $F$-threshold 
$c^{\bf m}({\bf m}) = 
\mbox{max}~\{x\mid f_{R_{p, n+1}}(x)\neq 0\}$.

Now, by Lemma~\ref{l4}, $f_{R_{p, n+1}}(x) = 0$, for $x\geq n$ and for $n-1\leq x\leq n$, 
$$f_{R_{p, n+1}}(x) = {\bf L}_{-n+1}(x) = 
\lim_{q\to \infty}\frac{L^s_{-n+1}(\lfloor xq\rfloor-(n-1)q)}{q^n} = 
\frac{2(n-x)^n}{n!},$$
where the last equality follows as $L^s_{-n+1}(a) = Y_{q-a-n}$.
\end{proof}

\section{The HK density function for $R_{p, 4}$}
\begin{notations}\label{n4}
Let $p\geq 5$ be a prime and 
$$\mbox{$P_0 = \frac{p-1}{2}\quad{and}\quad 
P_i = \frac{p-1}{2p}\left[\frac{1}{p^{i-1}}+ \cdots+ \frac{1}{p}+ 
1\right]\quad{for}\quad i\geq 1$}.$$
then 
 $$\mbox{$P_1 < \cdots < P_j < P_{j+1} < \cdots <\frac{1}{2} <\cdots 
< \left(P_{j+1}+\frac{1}{p^{j+1}}\right) <  
\left(P_{j}+\frac{1}{p^{j}}\right) <\cdots < \left(P_1+\frac{1}{p}\right)$}.$$

We divide  the interval 
$$\mbox{$[2,3) = [2,~~ 2+ \frac{p-1}{2p})\cup 
 [2+ \frac{p-1}{2p}),~~2+ \frac{p+1}{2p})\cup [2+ \frac{p+1}{2p},~~ 3)$}, $$ 
 then $[2+ \frac{p-1}{2p},~~2+ \frac{p+1}{2p}) = [2+ P_1,~~ 2+ P_1+\frac{1}{p})$ 
can be further divided as

$$[2+ P_1,~~ 2+ P_1+\frac{1}{p}) =  \bigcup_{j=1}^{\infty} [2+P_{j},~~ 
2+P_{j+1})\cup \{2+\frac{1}{2}\}\cup\bigcup_{j=1}^{\infty} [2+P_{j+1}+
\frac{1}{p^{j+1}},~~2+P_{j}+\frac{1}{p^{j}}).$$ 

Let $$\mu_{-1} = \mu^1_{-1}(P_0-2)\quad\mbox{and}\quad {\overline {\mu_{-1}}} 
= {\tilde \mu^1_{-1}}(P_0-1),$$
 where the formula for 
 $\mu^1_{-1}(a)$ and ${\tilde \mu^1_{-1}}(a)$ is given in Lemma~\ref{F1}

\end{notations}

\begin{thm}\label{t3}Let $k$ be a perfect field of characteristic $p\geq 5$ and let   
$$R_{p, 4} = \frac{k[x_0, x_1. x_2, x_3, x_4]}{(x_0^2+x_1^2+ x_2^2 + x_3^2 + x_4^2)}.$$ 
Then 
$$f_{R_{p, 4}, {\bf m}}(x)  = \begin{cases} x^3/3 &  \quad\mbox{for}\quad  0\leq x < 1\\\\
  x^3/3 -5/3(x-1)^3 & \quad\mbox{for}\quad  1\leq x < 2\\\\
 \frac{1}{3}x^3 -\frac{5}{3}(x-1)^3+\frac{11}{3}(x-2)^3 & \quad\mbox{for}\quad  
2\leq x < 2+P_1\end{cases}$$

 $$\begin{array}{lcl}
f_{R_{p, 4}, {\bf m}}(x) & = &
  \frac{(3-x)^3}{3}+ 
\frac{4}{3} \sum_{i = 1}^{j} \left[\frac{1}{p^i}+P_i +2-x\right]^3
(\mu_{-1}{\overline {\mu_{-1}}}^{i-1})\\\\
& &  +\left[\frac{8}{3}\left[x-2-P_j\right]^3-
\frac{4}{p^j}\left[x-2-P_j\right]^2
+\frac{2}{3p^{3j}}\right]
({\overline {\mu_{-1}}}^j),\\\\
& &\hspace{10pt} \quad\mbox{for}\quad 2+P_j\leq x < 2+P_{j+1}\quad\mbox{and for}\quad j\geq 1\end{array}$$
 $$\begin{array}{lcl}
f_{R_{p, 4}, {\bf m}}(x) & = & 
 \frac{(3-x)^3}{3}+ 
\frac{4}{3} \sum_{i = 1}^{j} \left[\frac{1}{p^i}+P_i +2-x\right]^3
(\mu_{-1}{\overline {\mu_{-1}}}^{i-1}),\\\\
& &  \mbox{for}\quad 2+P_{j+1}+\frac{1}{p_{j+1}} \leq x < 2+P_{j}+\frac{1}{p_j}\quad\mbox{and for}\quad j\geq 1.\end{array}$$
$$f_{R_{p, 4}, {\bf m}}(x)   = \begin{cases}\frac{(3-x)^3}{3} &\quad\mbox{for}\quad 
2+P_{1}+\frac{1}{p_{1}} \leq x <3\\\\
0 & \quad\mbox{for}\quad x\geq 3.\end{cases}$$
\end{thm}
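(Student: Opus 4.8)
The plan is to compute $f_{R_{p,4},\mathbf{m}}(x)$ on each piece of $[0,\infty)$ using the machinery already in place. On $[0,2+P_1)$ and on $[2+P_1+\tfrac1p,\infty)$ everything is ``computable'' in the sense of Notations~\ref{n1}: by Lemma~\ref{l4}(2) (with $n=3$, so $n_0=1$) the density function is one of the explicit cubics $\mathbf{Z}_{-i}(x)$ or $\mathbf{L}_{-i}(x)$ from Remark~\ref{n3}. First I would determine the coefficients $r_{ij},s_{ij}$ for $n=3$: since $Y_m=\tfrac{2m^3}{3!}+O(m^2)$, one has $\mathbf{Z}_0(x)=x^3/3$, and solving the recursion of Notations~\ref{n2} at top degree gives $\mathbf{Z}_{-1}(x)=\tfrac{x^3}{3}-\tfrac53(x-1)^3$ and $\mathbf{Z}_{-2}(x)=\tfrac13x^3-\tfrac53(x-1)^3+\tfrac{11}{3}(x-2)^3$; dually $\mathbf{L}_{-2}(x)=\tfrac{(3-x)^3}{3}$, matching the Claim $\mathbf{Z}_{-j}(x)=\mathbf{L}_{-(n-1-j)}(n-x)$ in the proof of Theorem~\ref{f0}. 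This already pins down all the pieces for $x\notin[2+P_1,2+P_1+\tfrac1p)$, i.e. the first displayed block, the last displayed block, and (for the outer pieces) it also contributes to the remaining formulas via the $\mathbf{L}_{-2}$ term.

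The heart of the argument is the ``difficult range'' $[2+P_1,2+P_1+\tfrac1p)$, where by Lemma~\ref{l4}(2) we have $f_{R_{p,4}}(x)=\mathbf{Z}_{-2}(x)+2\lambda_0\lim_{q\to\infty}\mu^s_{-1}(a_q)/q^3$ with $a_q=\lfloor xq\rfloor-2q$ (here $2\lambda_0=2^{\lfloor3/2\rfloor+1}=4$). So I must understand the limiting behaviour of $\mu^s_{-n_0+1}(a)=\mu^s_0(a)$ on this range. The idea is an \emph{iteration}: write $q=p\cdot q'$ with $q'=p^{s-1}$ and use the projection formula $F^s_*(\sO(a))=F_*\big(F^{s-1}_*(\sO(a))\big)$ together with the $n=3$ structure results of Lemma~\ref{F1}. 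Concretely, $F^{s-1}_*(\sO(a))$ decomposes into line bundles plus one spinor type $\sS(t)$; pushing forward once more and tracking only the $\sS(-?)$-summands, the spinor multiplicities satisfy a recursion whose linear part is governed by $\mu^1_{-1}(P_0-2)=\mu_{-1}$ (for the $\sO\to\sS$ contribution) and $\tilde\mu^1_{-1}(P_0-1)=\overline{\mu_{-1}}$ (for the $\sS\to\sS$ contribution), exactly the constants introduced in Notations~\ref{n4}. The nested intervals $[2+P_j,2+P_{j+1})$ arise because at the $j$-th stage the relevant fractional part $\{a_q/q^{\,(j)}\}$ crosses the threshold $\tfrac{(n-2)(p-1)}{2p}=P_1$ governing, via Theorem~2 of [A], whether a new spinor bundle of the next type appears; this produces the geometric factor $\overline{\mu_{-1}}^{\,j}$ and the ``jump'' terms $\tfrac83(x-2-P_j)^3-\tfrac4{p^j}(x-2-P_j)^2+\tfrac2{3p^{3j}}$, while each already-resolved stage $i<j$ contributes a fixed cubic $\tfrac43\big(\tfrac1{p^i}+P_i+2-x\big)^3\mu_{-1}\overline{\mu_{-1}}^{\,i-1}$. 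I would set this up as an induction on $j$, checking the base case $j=1$ directly from Lemma~\ref{F1} and the recursion for the spinor multiplicity under one further Frobenius push-forward.

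The main obstacle is precisely this bookkeeping of the spinor multiplicities across the nested intervals: making the recursion $\mu^{s}_{\bullet}(a)$ in terms of $\mu^{s-1}_\bullet$, $\mu^1_{-1}$ and $\tilde\mu^1_{-1}$ completely explicit, and verifying that after dividing by $q^3$ and taking $q\to\infty$ the error terms $O(m^2)$ in $Y_m$ and the floor functions wash out, leaving exactly the stated polynomials with the exact breakpoints $2+P_j$ and $2+P_j+\tfrac1{p^j}$. One must also confirm continuity at the junctions (at $x=2+P_j$, at $x=2+P_{j+1}$, and at the symmetric points $2+P_j+\tfrac1{p^j}$), which serves as a useful internal consistency check, and verify that the two expressions for $f_{R_{p,4}}(x)$ on $[2+P_j,2+P_{j+1})$ — the ``$\mathbf{Z}$-side'' and the ``$\mathbf{L}$-side'', related by the partial symmetry $f_{R_p,4}(x)=f_{R_p,4}(3-x)$ valid for $0\le x\le 1-\tfrac1p$ extended appropriately — agree where both are defined. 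Once the spinor-multiplicity recursion is nailed down, the rest is assembling the pieces and summing the geometric contributions, which is routine.
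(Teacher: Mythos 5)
Your plan is essentially the paper's own proof: reduce to the dense set $\Z[1/p]$ by continuity, read off the explicit cubics ${\bf Z}_{-i}(x)$, ${\bf L}_{-i}(x)$ from Lemmas~\ref{l2}, \ref{l3}, \ref{l4} outside $[2+P_1,\,2+P_1+\frac1p)$, and on the difficult range compute $\mu^s_{-1}(a)$ by the digit-by-digit Frobenius iteration (the paper's Claim, proved by induction on $j$ via the one-step transition matrix whose relevant entries are $\mu_{-1}=\mu^1_{-1}(P_0-2)$ and $\overline{\mu_{-1}}=\tilde\mu^1_{-1}(P_0-1)$ from Lemma~\ref{F1}), the nested intervals $[2+P_j,2+P_{j+1})$ coming exactly from how many top $p$-adic digits of $a$ equal $P_0$. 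Only minor slips need fixing in the write-up: the projection formula must carry the twist, $F^s_*(\sO(a))=F_*\bigl(F^{s-1}_*(\sO(a-P_0p^{s-1}))\otimes\sO(P_0)\bigr)$; for $s-1\geq 2$ the bundle $F^{s-1}_*(\sO(a))$ may contain two spinor types (only $s=1$ guarantees one, which is why the induction must track both the $\sO\to\sS$ and $\sS\to\sS$ contributions, as your recursion in fact does); and the partial symmetry holds only for $0\leq x\leq\frac12(1-\frac1p)$, not $1-\frac1p$.
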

\begin{proof}Since we know that the function $f_{R_{p, 4}, {\bf m}}$ is continuous
 and the function on the right hand side is piecewise polynomial, 
it is enough to  prove the equality for the dense subset 
$\{m/p^l\mid l, m \in \Z_{\geq 0}\}$ 
 of $[0, \infty)$, For $q=p^s$ and $xq = m =a+iq$ where $0\leq a <q$
we have 
 $$f_{R_{p, 4}}(x) =  \lim_{s\to \infty}\frac{1}{p^{3s}}
\ell(R_{p, 4}/{\bf m}^{[q]})_{a+iq}$$

We fix $q =p^s$ and the nonnegative integer $a<q$. By 
 Lemmas~\ref{l2} and \ref{l3}, for $n_0=1$ we get 
$$ \begin{array}{lcll}
\ell (\frac{R_{p, 4}}{{\bf m}^{[q]}})_{a} & = & Z^s_{0}(a)  = Y_a & \quad\mbox{for}\quad 
 0\leq a < q\\\\
\ell (\frac{R_{p, 4}}{{\bf m}^{[q]}})_{a+q}& = & 
 Z^s_{-1}(a) = Y_{a+q}-Y_1Y_a &\quad\mbox{for}\quad  
0\leq a < q\\\\
\ell (\frac{R_{p, 4}}{{\bf m}^{[q]}})_{a+2q}& = & 
 Z^s_{-2}(a) = Y_{a+2q}-5Y_{a+q}+11Y_a &\quad\mbox{for}\quad  
0\leq a < \frac{q}{p}(\frac{p-1}{2})\\\\
\ell (\frac{R_{p, 4}}{{\bf m}^{[q]}})_{a+2q}& = & \nu^s_{-2}(a)+
2\lambda_0\mu^s_{-1}(a) & \quad\mbox{for}\quad  
\frac{q}{p}(\frac{p-1}{2})\leq a < \frac{q}{p}(\frac{p+1}{2}) \\\\
\ell (\frac{R_{p, 4}}{{\bf m}^{[q]}})_{a+2q}& = & L_{-2}^s(a) = Y_{q-a-3}
&\quad\mbox{for}\quad  
\frac{q}{p}(\frac{p+1}{2}) \leq a <  q\\\\
 & = & 0 & \quad\mbox{otherwise}.
\end{array}$$

By Lemma~\ref{l2}~(6) and (7) we have  $\nu_{-2}^s(a) = Y_{q-a-3}$. Therefore 
we only  need to compute 
$\mu_{-1}^s(a)$  for $a$ in the range
$\frac{p-1}{2p}\leq a/q < \frac{p+1}{2p}$.

We will use the following fact: If $b_0+
\cdots+b_{m-1}p^{m-1} = b$ is a $p$-adic expansion of $b$ then 
$$b_{m-1} < P_0 \iff b/p^m < (p-1)/2p\quad\mbox{and}\quad   
b_{m-1} > P_0 \iff  b/p^m \geq (p+1)/2p.$$ 
Therefore, by Lemma~\ref{l3}~(2)~(ii), 
$$b_{m-1} < P_0 \implies \mu_{-1}^m(b) = Z^m_{-2}(b)-\nu^m_{-2}(b)
=
Y_{b+2p^m}- Y_1Y_{b+p^m} + 
(Y_1^2-Y_2)Y_b -Y_{p^m-b-3}$$
and $b_{m-1} > P_0 \implies \mu_{-1}^m(b) = 0$.
Moreover for $m=1$, by Lemma~\ref{F1}, 
 $b = b_0 \geq  P_0$ implies $\mu_{-1}^1(b) = 0$.

Consider the $p$-adic expansion 
 $a_0+a_1p+\cdots + a_{s-1}p^{s-1}$ of $a$. Then by the hypothesis on $a$  we have
$a_{s-1} = P_0$.

In general 
if $1\leq j \leq s-1$ is an integer such that 
$a_{s-j} = \cdots = a_{s-1} =  P_0. $
then $P_j \leq a/q < 
P_j+\frac{1}{p^j}.$
Moreover
\begin{enumerate}
\item $a_{s-j-1} <  P_0 \iff P_j 
\leq a/q < P_{j+1}.$

\item $a_{s-j-1} = P_0 \iff P_{j+1} 
\leq a/q < P_{j+1}+\frac{1}{p^{j+1}}.$

\item $a_{s-j-1} > P_0 \iff P_{j+1}+
\frac{1}{p^{j+1}} 
\leq a/q < P_j +\frac{1}{p^{j}}.$
\end{enumerate}

We choose
\begin{enumerate}
\item  $j=s-1$ if $a_0 = a_1 = \cdots = a_{s-1} = P_0$. Otherwise 
\item  $1\leq j \leq s-1$ is the integer such that $a_{s-j} = \cdots = a_{s-1} = P_0$ and
$a_{s-j-1}\neq P_0$.
\end{enumerate}
We denote  $A_{s-i} = a_0+a_1p+\cdots + a_{s-i-1}p^{s-i-1}.$
Therefore 
$$A_{s-j} = a_0+a_1p+\cdots + a_{s-j-1}p^{s-j-1}m \quad\mbox{where}
\quad a_{s-j-1} \neq P_0.$$
Hence   $\mu^{s-j}_{-1}(A_{s-j})$
and, for all $i$, $\nu^{s-i}_{-2}(A_{s-i})$ are computable.
 By Lemma~\ref{F1},  the numebrs 
$\mu^1_{-1}(b)$ and ${\tilde \mu^1_{-1}}(b)$ are computable.

\vspace{5pt}

\noindent{\bf Claim}.\quad Let  $\mu_{-1} = \mu^1_{-1}(P_0-2)$
 and 
${\overline {\mu_{-1}}} = {\tilde \mu^1_{-1}}(P_0-1)$.
Then 
$$\begin{array}{lcl}
\mu_{-1}^s(a) &  = & \nu_{-2}^{s-1}(A_{s-1})(\mu_{-1}) + \nu_{-2}^{s-2}(A_{s-2})(\mu_{-1} 
{\overline {\mu_{-1}}})\\\\
 & & +\cdots + \nu_{-2}^{s-j}(A_{s-j})(\mu_{-1}{\overline {\mu_{-1}}}^{j-1})
+\mu_{-1}^{s-j}(A_{s-j})({\overline {\mu_{-1}}}^j).\end{array}$$
\vspace{5pt}

\noindent{\underline {Proof of the claim}}:\quad
For an integer $m$ and $q= p^s$, we have the decomposition (by [A]) 
 $$F^s_*(\sO(m)) =  \sO(-2)^{\nu^s_{-2}(m)}\oplus
\sO(-1)^{\nu^s_{-1}(m)}\oplus \sO^{\nu^s_0(m)}\oplus M(0)^{\mu^s_{0}(m)}\oplus 
M(-1)^{\mu^s_{-1}}(m)$$
 $$F_*^{s}(\sS(m))=   \sO(-2)^{{\tilde \nu}^{s}_{-2}(m)}\oplus
\sO(-1)^{{\tilde \nu}^s_{-1}(m)}\oplus \sO^{{\tilde \nu}^s_0(m)}
\oplus M(0)^{{\tilde \mu}^s_{0}(m)}\oplus 
M(-1)^{{\tilde \mu}^s_{-1}(m)}.$$

By the projection formula  
$$F^s_*(\sO(a)) = F^j_*(F_*^{s-j}(\sO(A_{s-j}))\tensor \sO(P_0+\cdots 
+P_0p^{j-1})).$$ Therefore

\begin{equation}\label{***}[\nu^s_{-2}(a), \nu^s_{-1}(a), \nu^s_{0}(a),
\mu^s_{0}(a),\mu^s_{-1}(a)]
=  [\nu^{s-j}_{-2}(A_{s-j}), \cdots, \mu^{s-j}_{-1}(A_{s-j})]\cdot 
[b_{kl}]\times\mbox{j times}\times [b_{kl}],
\end{equation}
where $[b_{kl}]$ is the matrix  
$$[b_{kl}] = \left[\begin{matrix}
\nu^1_{-2}(P_0-2) & \nu^1_{-1}(P_0-2)  & \nu^1_{0}(P_0-2) & 0 & \mu_{-1}\\\\ 
\nu^1_{-2}(P_0-1) & \nu^1_{-1}(P_0-1) & \nu^1_{0}(P_0-1) & 0 & 0\\\\
\nu^1_{-2}(P_0) & \nu^1_{-1}(P_0)  & \nu^1_{0}(P_0) &\mu^1_{0}(P_0) & 0 \\\\ 
{\tilde \nu^1_{-2}}(P_0) & {\tilde \nu^1_{-1}}(P_0) & {\tilde \nu^1_{0}}(P_0) &
{\tilde \mu^1_{0}}(P_0) & 0\\\\
{\tilde \nu^1_{-2}}(P_0-1) & {\tilde \nu^1_{-1}}(P_0-1) & {\tilde \nu^1_{0}}(P_0-1) & 
0  & {\overline {\mu_{-1}}}.\end{matrix}\right]$$
Now the claim follows by induction on $j$.

If $a_0 = \cdots = a_{s-1} = P_0$ then $A_{s-j} = a_0$ and 
$\mu_{-1}^1(a_0) = 0$.

We recall that 
$$Y_a= \frac{1}{6}(2a^3+9a^2+13a+6) = a^3/3+O(a^2).$$
Hence 
$$\lim_{s\to \infty} \frac{\nu_{-2}^{s-i}(A_{s-i})}{p^{3s}} =
\lim_{s\to \infty} \frac{Y_{p^{s-i}-(a-p_0(p^{s-i}+\cdots + p^{s-1})-3}}{p^{3s}} = 
\frac{1}{3}\left[\frac{1}{p^i}+P_i-x\right]^3.$$

Now

\begin{enumerate}
\item If there is $1\leq j \leq s-1$ such that  
$P_j \leq a/q < P_{j+1}$ then 
$a_{s-j-1} < P_0$ and 
$$\lim_{q\to \infty} \frac{(4)\mu_{-1}^{s-j}(A_{s-j})}{q^3} = 
\lim_{q\to \infty} Z_{-2}^{s-j}(A_{s-j})-\nu_{-2}^{s-j}(A_{s-j})
 = \frac{8}{3}\left[x-P_j\right]^3-
\frac{4}{p^j}\left[x-P_j\right]^2
+\frac{2}{3p^{3j}}.$$

Hence 
$$\begin{array}{lcl}
\lim_{q\to \infty}\frac{\nu_{-2}^s(a) + 4\mu_{-1}^s(a)}{q^3} & = &
\frac{(1-x)^3}{3}+ 
\frac{4}{3} \sum_{i = 1}^{j} \left[\frac{1}{p^i}+P_i -x\right]^3
(\mu_{-1}{\overline {\mu_{-1}}}^{i-1})\\\\
& & +\left(\frac{8}{3}\left[x-P_j\right]^3-\frac{4}{p^j}\left[x-P_j\right]^2
+\frac{2}{3p^{3j}}\right)
({\overline {\mu_{-1}}}^j).\end{array}$$

\item If there is $1\leq j \leq s-1$ such that 
$P_{j+1}+\frac{1}{p^{j+1}} 
\leq a/q < P_j+\frac{1}{p^{j}}$. Then 
 $a_{s-j-1} > P_0$ and  hence $\mu_{-1}^{s-j}(A_{s-j}) = 0$.
This gives 
$$\lim_{q\to \infty}\frac{\nu_{-2}^s(a) + 4\mu_{-1}^s(a)}{q^3} = \frac{(1-x)^3}{3}+ 
\frac{4}{3} \sum_{i = 1}^{j} 
\left[\frac{1}{p^i}+
P_i - x\right]^3
(\mu_{-1}{\overline {\mu_{-1}}}^{i-1}).$$
 
\item If there is no $j$ satisfying the any of the above two cases then 
 $a/q = P_s$ and
$j = s-1$ and $A_{s-j} = A_1 = P_0$. But  $\mu_{-1}^1(P_0) = 0.$
Hence 
$$\lim_{q\to \infty}\frac{\nu_{-2}^s(a) + 4\mu_{-1}^s(a)}{q^3} = \frac{(1-x)^3}{3}+ 
\frac{4}{3} \sum_{i = 1}^{s-1} 
\left[\frac{1}{p^i}+
P_i-x\right]^3
(\mu_{-1}{\overline {\mu_{-1}}}^{i-1}).$$
\end{enumerate}

This proves the theorem.
\end{proof}

\end{document}

\bibitem[1]{1}{Aberbach, I.M.} {\it Extensions of weakly and strongly
$F$-rational rings by flat maps}, J. Algebra 241 (2001), 799-807.\\

\bibitem[2]{2}{Bourbaki, N.} {\it Commutative algebra}, Chapters 1–7, Elements of 
Mathematics (Berlin),Springer-Verlag, Berlin, 1998, 
Translated from the French, Reprint of the 1989 English translation. 
MR 1727221 (2001g:13001)\\

\bibitem[3]{3}{ W. Bruns and J. Herzog}, {\it Cohen-Macaulay rings}, 
Cambridge Stud. Adv. Math., Vol 39, Cambridge University Press, 
Cambridge, 1993.\\

\bibitem[4]{4}{N\'{u}\~{n}ez-Betancourt, L., P\'{e}rez, F., Stefani, A.}, {\it
On the existence of F-thresholds and related limits}, Trans. Amer. Math. Soc.
370 (2018), no. 9, 6629-6650.\\

\bibitem[5]{5}{M. Demazure}, {\it Anneaux gradu\'{e}s normaux}, in Seminaire 
Demazure-Giraud-Teissier, Singularities des surfaces, Ecole Polytechnique, 1979.\\

\bibitem[7]{7}{R. Hartshorne,}, {\it Stable reflexive sheaves}, Math. Ann.
{\bf 254} (1980), no. 2, 121-176.\\

\bibitem[8]{8}{Hochster, M., Huneke, C.} {\it Tight closure,
invariant theory and the Brian\c{c}on-Skoda theorem}, J. Amer. Math. Soc. 3 (1990),
31-116.\\

\bibitem[9]{9}{Hochster, M., Huneke, C.} {\it $F$-regularity, test
elements, and smooth base change}, Trans. Amer. Math. Soc. 346 (1994), 
no. 1, 1-62.\\

\bibitem[10]{10}{C. Huneke, M. Musta\c{t}\u{a}, S. Takagi, K.I. Watanabe},
{\it F-thresholds, tight closure, integral closure and multiplicity bounds},
 Michigan Math. J. 57, in Special Volume in  Honor  of  Melvin  Hochster,
Univ.  Michigan  Press,  Ann  Arbor,  2008,  463–483.\\

\bibitem[11]{11}{C. Huneke, S. Takagi and  K.I. Watanabe}, {\it 
Multiplicity bounds in graded rings},  Kyoto J. Math. 51 (2011), no. 1, 127-147.\\

\bibitem[12]{12}{K. Kurano}, {\it The singular Riemann-Roch 
theorem and Hilbert-Kunz functions}, J. Algebra, 304, 487-499.\\

\bibitem[13]{13}{G.A. Miller, H.F. Blichfeldt and L.E. Dickson}, {\it 
Theory and applications of finite groups}, Reprint of 1916 Edition with 
corrections, New York, G.F Stechert $\&$ Co., 1938.\\

\bibitem[15]{15}{Musta\c{t}\u{a}, M., Takagi, S., Watanabe, K.I.},
{\it F-thresholds and Bernstein-Sato polynomials}, European
congress of mathematics, 341-364, Eur. Math. Soc., Zurich, 2005.\\

\bibitem[16]{16}{C. Peskine and L. Szpiro}, {\it Dimension projective finie
et cohomologie locale}, Publ. Math. I.H.E.S. 42 (1972), 47-119.\\

\bibitem[17]{17}{K. Schwede}, {\em Generalised divisors and reflexive sheaves}, 
Homepage.\\

\bibitem[18]{18}{M. Tomari}, {\it Multiplicity of filtered rings and simple 
$K3$ singularities of multiplicity two}, Publ. Res. Inst. Math. Sci. {\bf 38} (2002), 
693-724.\\

\bibitem[21]{21}{Watanabe, K.I.}, {\it Some remarks concerning Demazure’s construction of 
normal graded rings}, Nagoya Math. J.83(1981), 203-211.\\

\bibitem[22]{22}{K.I. Watanabe and K.I. Yoshida} {\it Hilbert-Kunz 
multiplicity and an inequality between multiplicity and 
colength}, J. Algebra, {\bf 230}  (2000), 295-317

\end{document}